\documentclass{article}

\usepackage[utf8]{inputenc} \usepackage[T1]{fontenc}    \usepackage[english]{babel}

\usepackage{amsfonts}
\usepackage{nicefrac}       \usepackage{microtype}      \usepackage{lmodern}
\usepackage{amssymb,amsmath,amsthm}
\usepackage{stmaryrd}
\usepackage{bbm,bm}
\usepackage{latexsym}
\usepackage{xcolor}

\usepackage{enumerate}
\usepackage{verbatim}
\usepackage{booktabs}       

\usepackage{url}            \usepackage[colorlinks=true]{hyperref}
\usepackage{cleveref}

\usepackage{parskip}
\usepackage{geometry}
\geometry{margin=1in}

\usepackage[short]{optidef}

\usepackage{thmtools}

\declaretheorem{theorem}
\declaretheorem{corollary}
\declaretheorem{lemma}
\declaretheorem{proposition}
\declaretheorem{observation}
\declaretheorem{fact}

\declaretheoremstyle[qed=$\square$]{definitionwithend}
\declaretheorem[style=definitionwithend]{definition}
\declaretheorem[style=definitionwithend]{assumption}

\declaretheorem[style=definitionwithend]{remark}

\crefname{fact}{Fact}{Facts}
\crefname{algorithm}{Algorithm}{Algorithms}
\crefname{assumption}{Assumption}{Assumptions}

\usepackage{import}
\usepackage{pdfpages}
\usepackage{transparent}

\definecolor{gold}{rgb}{0.85,0.65,0}

\newcommand{\by}{\times}

\newcommand{\ip}[1]{\ensuremath{\left\langle #1 \right\rangle}}

\newcommand{\set}[1]{\left\{#1\right\}}

\newcommand{\bb}{\mathbb}

\def\R{{\mathbb{R}}}
\def\S{{\mathbb{S}}}

\def\X{{\mathbb{X}}}

\def\cF{{\cal F}}
\def\cG{{\cal G}}

\def\cM{{\cal M}}

\def\cR{{\cal R}}
\def\cS{{\cal S}}

\DeclareMathOperator{\Opt}{Opt}

\DeclareMathOperator{\tr}{tr}

\DeclareMathOperator*{\E}{\mathbb{E}}

\DeclareMathOperator{\spann}{span}

\DeclareMathOperator{\rint}{rint}

\DeclareMathOperator{\conv}{conv}
\DeclareMathOperator{\cone}{cone}
\DeclareMathOperator{\clconv}{clconv}
\DeclareMathOperator{\clcone}{clcone}

 \usepackage{soul}
 \usepackage{authblk}

\newcommand{\faceeq}{\trianglelefteq}

\newcommand{\SDP}{\textup{SDP}}
\newcommand{\obj}{{\textup{obj}}}
\usepackage[numbers,sort,compress]{natbib}
\PassOptionsToPackage{numbers, compress}{natbib}

\newcommand{\mathprog}[1]{}
\usepackage{tikz}

\begin{document}

\title{On semidefinite descriptions for convex hulls of quadratic programs}
\author[1,2]{Alex L.\ Wang}
\author[1]{Fatma K\i l\i n\c{c}-Karzan}
\affil[1]{Carnegie Mellon University, Pittsburgh, PA, 15213, USA.}
\affil[2]{Purdue University, West Lafayette, IN, 47907, USA.}
\date{\today}

\maketitle

\begin{abstract}
    Quadratically constrained quadratic programs (QCQPs) are a highly expressive class of nonconvex optimization problems. While QCQPs are NP-hard in general, they admit a natural convex relaxation via the standard semidefinite program (SDP) relaxation. In this paper we study when the convex hull of the epigraph of a QCQP coincides with the projected epigraph of the SDP relaxation. We present a sufficient condition for convex hull exactness and show that this condition is further necessary under an additional geometric assumption. The sufficient condition is based on geometric properties of $\Gamma$, the cone of convex Lagrange multipliers, and its relatives $\Gamma_1$ and $\Gamma^\circ$.
\end{abstract}

\section{Introduction}
\label{sec:intro}

A quadratically constrained quadratic program (QCQP) is an optimization problem of the form
\begin{align}
	\label{eq:qcqp}
\Opt\coloneqq \inf_{x\in\R^n}\set{q_\obj(x):\,
	q_i(x) \leq 0,\,\forall i\in[m]},
\end{align}
where $q_\obj, q_1,\dots,q_m:\R^n\to\R$ are each (possibly nonconvex) quadratic functions.
For each $i\in[0,m]$, we will write $q_i(x) = x^\top A_i x + 2b_i^\top x + c_i$ for $A_i\in\S^n$, $b_i\in\R^n$, and $c_i\in\R$.
These optimization problems arise naturally in applications (see e.g., \cite{wang2020tightness} and references therein). 

Although QCQPs are NP-hard in general, they admit a natural convex relaxation known as the standard semidefinite program (SDP) relaxation, 
\begin{align*}
&\Opt_\SDP \coloneqq\inf_{\substack{x\in\R^n\\X\in\S^n}}\set{\ip{A_\obj, X} + 2b_\obj^\top x + c_\obj:\,
        \begin{array}{l}
	 \ip{A_i,X} + 2b_i^\top x + c_i \leq0,\,\forall i\in[m]\\
     X\succeq xx^\intercal
	 \end{array}}.
\end{align*}
A growing line of work has offered deterministic conditions under which the SDP relaxation of a general QCQP is exact for various definitions of exactness.
In their celebrated paper, \citet{fradkov1979s-procedure} prove the S-lemma, which implies \textit{objective value exactness}---the condition that the optimal value of the QCQP and the optimal value of its SDP relaxation coincide---holds for QCQPs with a single constraint. \citet{locatelli2016exactness,jeyakumar2013trust,bomze2018extended} present sufficient conditions for the same condition to hold under additional linear constraints and/or one additional quadratic constraint (possibly using strengthened relaxations).
\citet{burer2019exact} give sufficient conditions for objective value exactness for general diagonal QCQPs---those QCQPs for which $A_\obj,A_1,\dots,A_m$ are diagonal matrices.
\citet{sojoudi2014exactness} study sparse SDPs where the $A_i$ matrices have an underying graph structure and give sufficient conditions for objective value exactness phrased in terms of the graph structure and the associated coefficients.
\citet{wang2020tightness} continue this line of work by developing a general framework for deriving sufficient conditions for both objective value exactness and \textit{convex hull exactness}---the condition that the convex hull of the QCQP epigraph coincides with the (projected) SDP epigraph---for QCQPs where a specific dual set, $\Gamma_1$, is \emph{polyhedral} (see \cref{sec:prelim}).
Beyond being a natural sufficient condition for objective value exactness, convex hull exactness has its own applications and motivation, particularly
in the derivation of strong relaxations of certain critical substructures in nonconvex problems; see e.g., \cite{wang2020tightness} and references therein. 

While the framework presented in \cite{wang2020tightness} at once covers many existing results on exactness \cite{burer2019exact,fradkov1979s-procedure,hoNguyen2017second,modaresi2017convex,yildiran2009convex,burer2017how}, it is still quite limited. In particular, the assumption that $\Gamma_1$ is polyhedral is rarely satisfied outside of simultaneously diagonalizable QCQPs and precludes the results in \cite{wang2020tightness} from being applicable to a wider range of interesting QCQPs.
In this paper, we generalize the framework of \cite{wang2020tightness} by eliminating its reliance on the polyhedrality assumption.
Specifically, 
\cref{thm:conv_hull_sufficient}
states that convex hull exactness holds as long as certain systems of equations (that depend on $\Gamma$, $\Gamma_1$, or $\Gamma^\circ$) contain nontrivial solutions.
This sufficient condition can be checked systematically if $\Gamma$, $\Gamma_1$, or $\Gamma^\circ$ is sufficiently simple. 
Furthermore, \cref{thm:conv_nec} states that this sufficient condition is also \emph{necessary} under the assumption that $\Gamma^\circ$ is facially exposed (see \cref{as:facially_exposed}).  To the best of our knowledge, this is the first necessary and sufficient condition for convex hull exactness even in the context of diagonal QCQPs (where $\Gamma$ is automatically polyhedral and facially exposed).
In \cref{thm:polyhedral}, we provide refined \emph{necessary and sufficient} conditions for convex hull exactness in the previous polyhedral setting of \cite{wang2020tightness}. 
We illustrate our results (\cref{thm:conv_hull_sufficient,thm:polyhedral}) on two applications (quadratic matrix programs (QMPs) and sets defined by two quadratic constraints) in \cref{sec:apps}. In the first application, the resulting $\Gamma$ set is non-polyhedral, and thus the sufficient conditions from \cite{wang2020tightness} are not applicable. In the second application, we illustrate that our \cref{thm:polyhedral} reduces to a succinct and easy to check necessary and sufficient condition for convex hull exactness when $q_\obj=0$ and $m=2$.

\subsection{Notation}
For a positive integer $n$, let $[n]\coloneqq\set{1,\dots,n}$ and let $[0,n]\coloneqq\set{0,1,\dots,n}$.
For $i\in[n]$, let $e_i$ be the $i$th standard basis vector in $\R^n$.
Let $0_n$ denote the zero vector in $\R^n$.
Let $\S^n$ denote the set of $n\by n$ real symmetric matrices and $\S^n_+$ the cone of positive semidefinite (PSD) matrices.
For $M\in\S^n$, we write $M\succeq 0$ (resp.\ $M\succ 0$) to denote that $M$ is PSD (resp.\ positive definite).
Let $\ker(M)$ denote the kernel of $M$.
Suppose $\cM$ is a subset of some Euclidean space. Let
$\conv(\cM)$,
$\clcone(\cM)$,
$\spann(\cM)$,
$\cM^\perp$,
$\rint(\cM)$,
and
$\dim(\cM)$
denote the
convex hull, closed conic hull, span (linear hull), orthogonal complement, relative interior, and dimension of $\cM$ respectively.
Specifically, if $\E$ is the Euclidean space containing $\cM$, then $\cM^\perp\coloneqq\set{x\in \E:\, \ip{x,m}= 0,\,\forall m\in\cM}$.
Suppose $K$ is a cone in some Euclidean space. Let $K^\circ$ denote the polar cone of $K$. The notation $F\faceeq K$ denotes that $F$ is a face of $K$. By convention, faces of cones are always nonempty.
Given $x,y$ in some Euclidean space $\E$, let $[x\pm y]\coloneqq \conv(\set{x-y, x+y})$. 

\section{Preliminaries}
\label{sec:prelim}

We study the epigraph of the QCQP in \eqref{eq:qcqp}:
\begin{align*}
	\cS \coloneqq \set{(x,t)\in\R^n\times \R:\, \begin{array}
		{l}
		q_\obj(x) \leq 2t\\
		q_i(x) \leq 0,\,\forall i\in[m]
	\end{array}}.
\end{align*}

It is well known in the QCQP literature~\cite{benTal2001lectures,wang2020tightness,fujie1997semidefinite} that the SDP relaxation of a QCQP is equivalent (under standard assumptions) to the dual of the Lagrangian dual. We introduce notation related to Lagrangian aggregation and then state this equivalence formally in \cref{lem:S_SDP}.

Let $q:\R^n\to\R^{1+m}$ be indexed by $[0,m]$ where $q(x)_i = q_i(x)$ for $i\in[0,m]$.
Let $e_\obj, e_1,\dots,e_m$ denote the corresponding unit vectors in $\R^{1+m}$. 
Note that for $(\gamma_\obj,\gamma)\in\R^{1+m}$, the expression
$\ip{(\gamma_\obj,\gamma),q(x)} = \sum_{i=0}^m \gamma_i q_i(x)$ is a quadratic function in $x\in\R^n$.
Define $A(\gamma_\obj,\gamma)\coloneqq \sum_{i=0}^m\gamma_i A_i$. Similarly, define $b(\gamma_\obj,\gamma)$, and $c(\gamma_\obj,\gamma)$.

We will at times work on the slice of $\R^{1+m}$ where the coordinate $\gamma_\obj$ is fixed to $\gamma_\obj = 1$.
Let $A[\gamma]\coloneqq A(1,\gamma)$ and similarly define $b[\gamma]$ and $c[\gamma]$. Set $[\gamma,q(x)] \coloneqq \ip{(1,\gamma),q(x)} = q_\obj(x) + \sum_{i=1}^m \gamma_i q_i (x)$.

Note that
\vspace{-10pt}
\begin{align*}
	\ip{(\gamma_\obj,\gamma),q(x)}
&=x^\top A(\gamma_\obj,\gamma) x + 2b(\gamma_\obj,\gamma)^\top x + c(\gamma_\obj,\gamma),\quad\text{and}\\
[\gamma,q(x)]
&= x^\top A[\gamma] x + 2b[\gamma]^\top x + c[\gamma].
\end{align*}

We extend the following definition from~\cite{wang2020tightness}.
\begin{definition}
\label{def:convex_lagrange_mulipliers}
The \textit{cone of convex Lagrange multipliers} for \eqref{eq:qcqp} and its cross section at $\gamma_0 = 1$ are respectively
\begin{gather*}
	\Gamma \coloneqq \set{(\gamma_\obj,\gamma)\in\R_+\times\R^m_+:\,
		A(\gamma_\obj,\gamma)\succeq 0
    }\\
	\Gamma_1 \coloneqq
\set{\gamma\in\R_+^m:\,
	A[\gamma]\succeq 0
	}. \qedhere
\end{gather*}
\end{definition}
These sets are related to the feasible domain of a \emph{partial} dual of the SDP relaxation (see \cref{rem:gamma_1_is_dual}).
Note that $\ip{(\gamma_\obj,\gamma),q(x)}$ is a convex function in $x$ for any $(\gamma_\obj,\gamma)\in\Gamma$. Similarly, $[\gamma,q(x)]$ is convex in $x$ for any $\gamma\in\Gamma_1$.

We make the following assumption for the remainder of the paper. This assumption can be interpreted as a dual strict feasibility condition and is  standard~\cite{burer2019exact,wang2020tightness}. 

\begin{assumption}
\label{as:definite}
There exists $(\gamma_\obj^*,\gamma^*)\in\Gamma$ such that $A(\gamma_\obj^*,\gamma^*)\succ 0$. 
\end{assumption}

\Cref{as:definite} is equivalent to the assumption that there exists $\gamma^*\in\Gamma_1$ 
such that $A[\gamma^*]\succ 0$.

\begin{remark}
\label{rem:Gamma_generated_by_Gamma_P}
Under \cref{as:definite},
it holds that 
$\Gamma = \clcone\left(\set{(1,\gamma):\, \gamma\in\Gamma_1}\right)$. (See discussion following \cite[Assumption 2]{wang2020tightness}.)\mathprog{\qed}
\end{remark}

The \emph{projected SDP relaxation} of $\cS$ is given by
\begin{align}
\label{eq:sdp_relaxation_primal}
\begin{gathered}
\cS_\SDP\coloneqq\set{(x,t)\in\R^n\times\R:\, \begin{array}
	{l}
	\exists X\succeq xx^\top:\\
	\ip{A_\obj, X} + 2b_\obj^\top x + c_\obj \leq 2t\\
	\ip{A_i, X} + 2b_i^\top x + c_i \leq 0,\,\forall i\in[m]
\end{array}},\qquad
\Opt_\textup{SDP}\coloneqq \inf_{(x,t)\in\cS_\SDP}2t
\end{gathered}
\end{align}
Taking $X = xx^\top$ in \eqref{eq:sdp_relaxation_primal} gives
$\Opt\geq \Opt_\SDP$ and
$\conv(\cS) \subseteq \cS_\SDP$.

The following lemma rewrites $\cS_\SDP$ in terms of $\Gamma$, $\Gamma_1$, and $\Gamma^\circ$ and follows from a straightforward duality argument.
These expressions allow for a straightforward analysis of $\cS_\SDP$ if the corresponding dual set is sufficiently simple.
\begin{restatable}{lemma}{lemSSDP}
\label{lem:S_SDP}
Suppose \cref{as:definite} holds. Then,
\begin{align*}
\cS_\SDP &= \set{(x,t)\in\R^{n+1}:\, [\gamma,q(x)] \leq 2t ,\,\forall \gamma\in\Gamma_1}\\
&= \set{(x,t)\in\R^{n+1}: \,
\ip{(\gamma_\obj,\gamma), q(x)}\leq 2\gamma_\obj t,\, \forall (\gamma_\obj,\gamma)\in\Gamma}\\
& = \set{(x,t)\in\R^{n+1}:\, q(x) - 2te_\obj \in\Gamma^\circ}.
\end{align*}
\end{restatable}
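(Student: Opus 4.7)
The plan is to establish the three equalities in a chain: first the middle equality by SDP duality, then the first by homogenizing the $\Gamma$-condition using the relation between $\Gamma$ and $\Gamma_1$, and finally the third by unpacking the polar cone.

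For the middle equality, I would fix $(x,t) \in \R^{n+1}$ and reparametrize by $Y \coloneqq X - xx^\top \succeq 0$. Then $(x,t) \in \cS_\SDP$ becomes the SDP feasibility question: does there exist $Y \succeq 0$ such that $\ip{A_\obj, Y} \leq 2t - q_\obj(x)$ and $\ip{A_i, Y} \leq -q_i(x)$ for every $i \in [m]$? The necessity direction is weak duality: for any $(\gamma_\obj, \gamma) \in \Gamma$, aggregating these inequalities against $(\gamma_\obj, \gamma)$ and using $\ip{A(\gamma_\obj, \gamma), Y} \geq 0$ (since $A(\gamma_\obj, \gamma) \succeq 0$ and $Y \succeq 0$) produces $\ip{(\gamma_\obj, \gamma), q(x)} \leq 2\gamma_\obj t$. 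For the converse I would apply SDP strong duality---equivalently, a conic theorem of alternatives for the self-dual cone $\S^n_+$---to recover feasibility of the $Y$-system from the stated family of inequalities.

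The main obstacle is invoking strong duality cleanly, and this is where \cref{as:definite} enters. It supplies $(\gamma_\obj^*, \gamma^*) \in \Gamma$ with $A(\gamma_\obj^*, \gamma^*) \succ 0$, and a small perturbation $(\gamma_\obj^* + \varepsilon, \gamma^* + \varepsilon \mathbf{1})$ retains $A(\cdot) \succ 0$ while becoming strictly positive in every coordinate, yielding a dual Slater point and closing the duality gap.

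Granted the middle equality, the first follows by a generating-set argument: \cref{rem:Gamma_generated_by_Gamma_P} says $\Gamma = \clcone(\set{(1, \gamma) : \gamma \in \Gamma_1})$, and the inequality $\ip{(\gamma_\obj, \gamma), q(x)} \leq 2\gamma_\obj t$ cuts out a closed, linear, positively homogeneous condition in $(\gamma_\obj, \gamma)$, so requiring it on $\Gamma$ is equivalent to requiring it on the generators, which specializes to $[\gamma, q(x)] \leq 2t$ for $\gamma \in \Gamma_1$. The third equality is definitional: since $\Gamma$ is a cone, $\Gamma^\circ = \set{v \in \R^{1+m} : \ip{(\gamma_\obj, \gamma), v} \leq 0,\, \forall (\gamma_\obj, \gamma) \in \Gamma}$, and substituting $v = q(x) - 2t e_\obj$ reproduces the middle inequality.
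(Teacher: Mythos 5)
Your proposal is correct and follows essentially the same route as the paper: the substantive ingredient is strong conic duality under \cref{as:definite} (which supplies the strictly feasible multiplier), combined with $\Gamma=\clcone\left(\set{(1,\gamma):\,\gamma\in\Gamma_1}\right)$ from \cref{rem:Gamma_generated_by_Gamma_P} and the definition of the polar cone. The only differences are cosmetic: you establish the homogeneous $\Gamma$-characterization first via a conic theorem of alternatives (where strict feasibility is what guarantees an attained feasible $Y\succeq 0$, not merely a zero gap), whereas the paper first equates $\sup_{\gamma\in\Gamma_1}[\gamma,q(x)]$ with the inner infimum over $\xi\in\S^n_+$ and then passes to $\Gamma$; and you make explicit the small perturbation needed to get a multiplier interior to $\R^{1+m}_+$ as well, which the paper leaves implicit.
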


\begin{proof}
    Fix $(x,t)\in\R^{n+1}$. Note that
    \begin{align*}
    &\sup_{\gamma\in\Gamma_1} [\gamma,q(x)]= \inf_{\xi\in\S^n_+} \set{q_\obj(x) + \ip{A_\obj, \xi}:
        q_i(x) + \ip{A_i, \xi} \leq 0 ,\, \forall i\in[m]}.
    \end{align*}
    Here, the second equality follows from the strong conic duality theorem and \cref{as:definite}. Taking $X \coloneqq xx^\top + \xi$, we deduce that the first equality in \cref{lem:S_SDP} holds.
    
    By \cref{as:definite}, $\Gamma = \clcone\set{(1,\gamma):\, \gamma\in\Gamma_1}$ so that $[\gamma,q(x)]\leq 2t$ for all $\gamma\in\Gamma_1$ if and only if $\ip{(\gamma_\obj,\gamma),q(x)}\leq 2\gamma_\obj t$ for all $(\gamma_\obj,\gamma)\in\Gamma$; this gives the second equality. The third equality holds by definition of the polar cone.
\end{proof}

\begin{corollary}
\label{cor:opt_sdp_saddle}
Suppose \cref{as:definite} holds. Then,
$\Opt_\SDP = \inf_{x\in\R^n}\sup_{\gamma\in\Gamma_1} [\gamma,q(x)]$.
\end{corollary}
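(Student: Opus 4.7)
The plan is to derive the corollary as essentially a direct consequence of the first characterization of $\cS_\SDP$ given in \cref{lem:S_SDP}, namely
\[
\cS_\SDP = \set{(x,t)\in\R^{n+1}:\, [\gamma,q(x)]\leq 2t,\ \forall \gamma\in\Gamma_1}.
\]
From this, the set of admissible pairs decouples: $(x,t)\in \cS_\SDP$ if and only if $2t$ is an upper bound on $[\gamma,q(x)]$ over all $\gamma\in\Gamma_1$.

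First I would rewrite the infimum defining $\Opt_\SDP$ by exposing $x$ and $t$ separately, i.e., write $\Opt_\SDP=\inf_x \inf\set{2t:\,(x,t)\in\cS_\SDP}$. For each fixed $x\in\R^n$, the inner infimum is $\inf\set{2t:\,2t\geq [\gamma,q(x)],\ \forall\gamma\in\Gamma_1} = \sup_{\gamma\in\Gamma_1}[\gamma,q(x)]$, which is precisely the quantity we want. Substituting back yields the claimed equality.

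There is essentially no obstacle, but I would be mindful of two edge cases to make the argument airtight. First, if $\sup_{\gamma\in\Gamma_1}[\gamma,q(x)]=+\infty$ for some $x$, then there is no $t$ with $(x,t)\in\cS_\SDP$, and by the usual convention $\inf\emptyset=+\infty$; this matches the value of the supremum on the right-hand side, so such $x$ contributes nothing on either side. Second, when the supremum is finite, the constraint set for $t$ is a closed half-line, so the infimum is attained at $2t=\sup_{\gamma\in\Gamma_1}[\gamma,q(x)]$. Combining both cases and taking $\inf_x$ completes the proof. \Cref{as:definite} is used only implicitly, through its role in \cref{lem:S_SDP}; no further appeal is needed here.
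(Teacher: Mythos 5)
Your proof is correct and is exactly the argument the paper intends: the corollary is an immediate consequence of the first characterization of $\cS_\SDP$ in \cref{lem:S_SDP}, obtained by minimizing $2t$ for each fixed $x$ and then over $x$. Your handling of the $\sup=+\infty$ case and attainment is a careful spelling-out of the same (essentially one-line) reasoning, so there is nothing further to add.
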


\begin{corollary}
\label{cor:SSDP_closed}
Suppose \cref{as:definite} holds. Then, $\cS_\SDP$ is closed.
\end{corollary}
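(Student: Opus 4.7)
The plan is to read off closedness directly from the third representation of $\cS_\SDP$ established in \cref{lem:S_SDP}. Write
\[
\cS_\SDP = \set{(x,t)\in\R^{n+1}:\, q(x) - 2te_\obj \in \Gamma^\circ} = \varphi^{-1}(\Gamma^\circ),
\]
where $\varphi:\R^{n+1}\to\R^{1+m}$ is the map $\varphi(x,t) \coloneqq q(x) - 2te_\obj$. Each coordinate of $\varphi$ is a polynomial in $(x,t)$, so $\varphi$ is continuous. Since $\Gamma^\circ$ is defined as the polar cone of a set in a Euclidean space, it is a closed convex cone (polar cones are always closed as intersections of closed halfspaces). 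Thus $\cS_\SDP$ is the preimage of a closed set under a continuous map, and is therefore closed.

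If one prefers not to go through the polar cone, the first representation in \cref{lem:S_SDP} gives an equally short argument: for each fixed $\gamma\in\Gamma_1$, the sublevel set $\set{(x,t):\, [\gamma,q(x)]\leq 2t}$ is the preimage of $(-\infty,0]$ under the continuous function $(x,t)\mapsto [\gamma,q(x)]-2t$, and $\cS_\SDP$ is the intersection of all such sublevel sets over $\gamma\in\Gamma_1$, hence closed as an intersection of closed sets.

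There is no real obstacle here, since all the substantive work has already been absorbed into the proof of \cref{lem:S_SDP}: once that representation is in hand, \cref{as:definite} is only needed to justify writing $\cS_\SDP$ in the form $\varphi^{-1}(\Gamma^\circ)$ and plays no further role. The only point worth flagging is that closedness does not require $\Gamma$, $\Gamma_1$, or $\Gamma^\circ$ to be polyhedral or facially exposed; it holds at the level of generality of \cref{lem:S_SDP}.
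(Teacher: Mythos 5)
Your proof is correct and matches the paper's intent: the corollary is stated without proof precisely because it follows immediately from the representations in \cref{lem:S_SDP}, e.g.\ as the preimage of the closed cone $\Gamma^\circ$ under the continuous map $(x,t)\mapsto q(x)-2te_\obj$ (or as an intersection of closed sublevel sets). Nothing further is needed.
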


\begin{remark}
\label{rem:gamma_1_is_dual}
We can interpret $\Opt_\textup{SDP}$ in \eqref{eq:sdp_relaxation_primal} as minimizing over $x\in\R^n$ the value of an inner minimization problem over the matrix variable $X$. Writing $X = xx^\top + \xi$ and taking the SDP dual in $\xi$ results in the same saddle-point structure observed in \cref{cor:opt_sdp_saddle}. In other words, \emph{$\Gamma_1$ is the feasible domain to this partial dual of \eqref{eq:sdp_relaxation_primal}.}\mathprog{\qed}
\end{remark}

Our results and analysis depend on the geometry of $\Gamma$, $\Gamma^\circ$, and their faces.
We recall properties of convex cones and their faces specialized to our setting. See \cite{barker1981theory} for a general reference. 

\begin{definition}
\label{def:conjugate_face}
For a face $\cG\faceeq\Gamma^\circ$ and $(g_\obj,g)\in\rint(\cG)$, the \emph{conjugate face of $\cG$} is 
\[\cG^\triangle \coloneqq \Gamma \cap \cG^\perp = \Gamma\cap (g_\obj,g)^\perp.\] 
Similarly, define the conjugate face for $\cF\faceeq \Gamma$.\mathprog{\qed}
\end{definition}
\begin{definition}
A face $\cG\faceeq\Gamma^\circ$ is \textit{exposed} if there exists $(\gamma_\obj,\gamma)\in\Gamma$ such that $\cG = \Gamma^\circ \cap (\gamma_\obj,\gamma)^\perp$.\mathprog{\qed}
\end{definition}

We associate faces of $\Gamma$ and $\Gamma^\circ$ to points $(x,t)\in\cS_\SDP$ as follows.
\begin{definition}
\label{def:face_G_F}
Given $(x,t)\in\cS_\SDP$, let $\cG(x,t)\faceeq\Gamma^\circ$ denote the minimal face of $\Gamma^\circ$ containing
$q(x) - 2te_\obj$ and define $\cF(x,t)\coloneqq \cG(x,t)^\triangle$.\mathprog{\qed}
\end{definition}

The next fact follows from \cref{def:face_G_F}.
\begin{fact}
\label{fact:q_in_rint_G}
For $(x,t)\in\cS_\SDP$, we have $q(x) - 2te_\obj \in\rint(\cG(x,t))$ and $\cF(x,t) = \Gamma \cap (q(x) - 2te_\obj)^\perp$.
\end{fact}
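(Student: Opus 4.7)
The plan is to observe that this fact is essentially unpacking the definitions together with one standard fact from convex analysis: in any closed convex set, every point lies in the relative interior of a unique face, and that face is the minimal face containing the point.

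First, I would record the starting observation. By \cref{lem:S_SDP}, membership of $(x,t)$ in $\cS_\SDP$ is equivalent to $q(x) - 2te_\obj \in \Gamma^\circ$, so the vector in question does belong to $\Gamma^\circ$ and it makes sense to speak of the minimal face of $\Gamma^\circ$ containing it. Next I would invoke the classical face-partition result: for any closed convex set $C$ and any $p \in C$, the collection of relative interiors of faces of $C$ partitions $C$, so there is a unique face $F \faceeq C$ with $p \in \rint(F)$, and this $F$ is precisely the minimal face of $C$ containing $p$ (a reference such as Rockafellar Theorem 18.2, or \cite{barker1981theory}, suffices). Applying this to $C = \Gamma^\circ$ and $p = q(x) - 2te_\obj$ shows that $p \in \rint(\cG(x,t))$, which is the first claim.

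For the second claim, I would simply combine the first claim with \cref{def:conjugate_face}. By definition $\cF(x,t) = \cG(x,t)^\triangle$, and \cref{def:conjugate_face} asserts that for any $(g_\obj, g) \in \rint(\cG(x,t))$ one has $\cG(x,t)^\triangle = \Gamma \cap (g_\obj, g)^\perp$. Taking $(g_\obj, g) = q(x) - 2te_\obj$, which is admissible by the first claim, yields the stated identity $\cF(x,t) = \Gamma \cap (q(x) - 2te_\obj)^\perp$.

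There is no real obstacle here: the statement is a direct reformulation of \cref{def:face_G_F} and \cref{def:conjugate_face} via the face-partition property of convex sets. The only subtlety worth flagging is making sure the face-partition result is applied to the closed convex cone $\Gamma^\circ$ (which is closed as a polar), so that faces and relative interiors behave as expected.
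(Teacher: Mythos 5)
Your proof is correct and matches what the paper intends: the paper states this Fact without proof as an immediate consequence of \cref{def:face_G_F} and \cref{def:conjugate_face}, and your argument simply makes explicit the standard ingredients (membership of $q(x)-2te_\obj$ in $\Gamma^\circ$ via \cref{lem:S_SDP}, the face-partition theorem identifying the minimal face as the one whose relative interior contains the point, and the well-definedness of the conjugate face). Nothing further is needed.
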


\section{Convex hull exactness}
\label{sec:conv_hull_exactness}

This section presents necessary and sufficient conditions for convex hull exactness, i.e., the property that $\conv(\cS) = \cS_\SDP$.
Below, we rephrase convex hull exactness as a question regarding the existence of nontrivial ``rounding directions.''
\begin{lemma}
\label{lem:conv_iff_R_nontrivial}
Suppose \cref{as:definite} holds. Then, $\conv(\cS) = \cS_\SDP$ if and only if for every $(x,t)\in\cS_\SDP\setminus\cS$, there exists a nonzero $(x',t')\in\R^{n+1}$ and $\epsilon>0$ such that 
$[(x,t)\pm\epsilon (x',t')] \subseteq \cS_\SDP
$.
\end{lemma}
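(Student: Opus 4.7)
The plan is to establish the two implications separately. The forward direction is a short Carath\'eodory-style calculation; the reverse direction is an induction on $\dim(\cS_\SDP)$ exploiting the fact that the rounding hypothesis is inherited by faces.

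For ($\Rightarrow$), I would assume $\conv(\cS) = \cS_\SDP$ and fix $(x,t) \in \cS_\SDP \setminus \cS = \conv(\cS) \setminus \cS$. I then write $(x,t) = \sum_i \lambda_i (x_i, t_i)$ as a convex combination of points of $\cS$; this representation must involve at least two distinct points (otherwise $(x,t) \in \cS$), so we may pick an index $j$ with $(x_j, t_j) \neq (x,t)$ and $\lambda_j < 1$. Setting $(x', t') := (x_j, t_j) - (x,t) \neq 0$, a direct calculation shows that both $(x,t) + \epsilon(x', t') = (1-\epsilon)(x,t) + \epsilon(x_j,t_j)$ and $(x,t) - \epsilon(x', t') = (1+\epsilon)(x,t) - \epsilon(x_j,t_j)$ are again convex combinations of $\{(x_i, t_i)\} \subset \cS$ for any $\epsilon \in (0, \min(1,\lambda_j/(1-\lambda_j))]$, and therefore lie in $\conv(\cS) = \cS_\SDP$.

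For ($\Leftarrow$), I want to show $\cS_\SDP \subseteq \conv(\cS)$ (the other inclusion being trivial), and proceed by induction on $\dim(\cS_\SDP)$. Fix $(x_0, t_0) \in \cS_\SDP$; if it lies in $\cS$ we are done, so assume not. The rounding hypothesis produces $(x', t') \neq 0$ and $\epsilon_0 > 0$ with $(x_0, t_0) \pm \epsilon_0 (x', t') \in \cS_\SDP$. I extend this segment maximally in both directions within $\cS_\SDP$, reaching extremities at distances $s_\pm \in [\epsilon_0, \infty]$. When both $s_\pm$ are finite, each extremity lies on a proper face $F_\pm$ of $\cS_\SDP$. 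Since a face absorbs any segment of $\cS_\SDP$ whose midpoint lies in it, the rounding hypothesis is inherited by each $F_\pm$ relative to $\cS \cap F_\pm$; the inductive hypothesis then places both extremities, and hence $(x_0, t_0)$, in $\conv(\cS)$.

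The main obstacle is the unbounded subcase, where (say) $s_+ = \infty$ and $(x', t') \in \rec(\cS_\SDP)$; here the inductive argument does not apply directly to a recession ray. My approach would be to invoke Minkowski's decomposition $\cS_\SDP = \conv(\textup{ext}(\cS_\SDP)) + \rec(\cS_\SDP)$ (after quotienting out the lineality space, if any) and argue that under the rounding hypothesis (i) every extreme point of $\cS_\SDP$ lies in $\cS$, which is immediate from the non-extremality of points outside $\cS$, and (ii) every recession direction of $\cS_\SDP$ is already a recession direction of $\conv(\cS)$. The structural facts $(0,1) \in \rec(\cS) \cap \rec(\cS_\SDP)$ and the explicit description $\cS_\SDP = \{(x,t) : q(x) - 2te_\obj \in \Gamma^\circ\}$ are the natural tools for (ii); this recession/lineality step is the most delicate piece of the argument.
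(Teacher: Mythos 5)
Your forward direction is complete and correct, and your final plan for the reverse direction---(i) every extreme point of $\cS_\SDP$ lies in $\cS$, (ii) every recession direction of $\cS_\SDP$ is a recession direction of $\conv(\cS)$, then conclude via the decomposition of a closed, line-free convex set into the convex hull of its extreme points plus its recession cone---is in essence the paper's own route (it cites Rockafellar, Theorem 18.5). The problem is that step (ii), which you explicitly leave as ``the most delicate piece,'' is never proved, and it is exactly the place where \cref{as:definite} must be used. Without it your argument is incomplete in two ways: the Minkowski decomposition you invoke requires $\cS_\SDP$ to be closed and \emph{line-free} (otherwise it has no extreme points at all, and ``quotienting out the lineality space'' breaks step (i), since extreme points of the quotient are equivalence classes rather than points that can be related to $\cS$); and even granting line-freeness, you still need to know that every recession direction of $\cS_\SDP$ recedes for $\conv(\cS)$ as well, which is not automatic from the definitions.

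The missing argument is short but essential. Pick $\gamma^*\in\Gamma_1$ with $A[\gamma^*]\succ 0$ (\cref{as:definite}). By \cref{lem:S_SDP}, every $(\tilde x,\tilde t)\in\cS_\SDP$ satisfies $[\gamma^*,q(\tilde x)]\le 2\tilde t$, and $[\gamma^*,q(\cdot)]$ is a strongly convex quadratic. Hence no direction $(x',t')$ with $x'\neq 0$ can recede (the inequality is violated far enough along the ray), and $(0_n,t')$ with $t'<0$ cannot recede either; so $\rec(\cS_\SDP)=\cone\set{(0_n,1)}$. This simultaneously gives line-freeness (which, together with closedness, \cref{cor:SSDP_closed}, legitimizes the decomposition, with no quotienting needed) and step (ii), since $(0_n,1)$ is trivially a recession direction of $\cS$ and hence of $\conv(\cS)$. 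With this supplied, your induction-on-faces detour and the case analysis over $s_\pm$ become unnecessary: (i), (ii) and the decomposition yield $\cS_\SDP\subseteq\conv(\cS)$ directly, which is precisely how the paper concludes. As written, however, the reverse implication rests on an unproven claim, so the proposal has a genuine gap.
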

\begin{proof}
    Note that $\cS_\SDP$ is a closed convex set containing no lines.
    It is clear that $(0_n,1)$ is a recessive direction of $\cS_\SDP$.
    We claim that $(0_n,1)$ is the only recessive direction: Let $\gamma^*$ such that $A[\gamma^*]\succ 0$ (this is possible by \cref{as:definite}) and consider any $(x',t')$ where $x'$ is nonzero. Then, for any $(\tilde x,\tilde t)\in\cS_\SDP$, we have $2(\tilde t+\alpha t') < [\gamma^*, q(\tilde x+\alpha x')]$ for all $\alpha>0$ large enough.
We deduce by \cite[Theorem 18.5]{rockafellar1970convex}, that $\cS_\SDP$ is the sum of the convex hull of its extreme points and the direction $(0_n,1)$. In particular $\conv(\cS) = \cS_\SDP$ if and only if $(x,t)$ is not extreme for every $(x,t)\in\cS_\SDP\setminus\cS$. By definition, $(x,t)$ is not extreme if and only if there exists a nonzero $(x',t')$ and $\alpha>0$ such that $[(x,t)\pm \alpha (x',t')]\subseteq \cS_\SDP$.
\end{proof}

We capture the relevant set of ``rounding directions'' from \cref{lem:conv_iff_R_nontrivial} in the following definition.
\begin{definition}
    \label{def:R}
The subspace of \emph{rounding directions} at $(x,t)\in\cS_\SDP$ is
\begin{align*}
\cR(x,t) \coloneqq
\set{(x',t')\in\R^{n+1}:\,
\begin{array}{l}
	\exists \epsilon>0:\,\\
    {}[(x,t)\pm \epsilon (x',t')]\subseteq\cS_\SDP
\end{array}
}.
\end{align*}
This set is \emph{nontrivial} if it is not $\set{0}$.\mathprog{\qed}
\end{definition}

Note that $\cR(x,t)$ is a convex cone and $-\cR(x,t) = \cR(x,t)$ so $\cR(x,t)$ is indeed a subspace.

\begin{remark}\label{rem:suffices_check_epigraph_tight}
Suppose $(x,t)\in\cS_\SDP$ and $2t > \sup_{\gamma\in\Gamma_1}[\gamma,q(x)]$. Then, by \cref{lem:S_SDP} there exists $\alpha>0$ such that $[(x,t)\pm \alpha(0_n,1)]\subseteq\cS_\SDP$. Thus, it suffices to verify the condition of \cref{lem:conv_iff_R_nontrivial} for points $(x,t)\in\cS_\SDP\setminus\cS$ for which $2t = \sup_{\gamma\in\Gamma_1}[\gamma,q(x)]$.\mathprog{\qed}
\end{remark}

\subsection{Sufficient conditions for convex hull exactness}
\label{subsec:conv_suff}

In general, $\cR(x,t)$ is difficult to describe explicitly. 
Below, we identify a more manageable subset $\cR'(x,t)\subseteq\cR(x,t)$. This will lead to a sufficient condition for convex hull exactness via \cref{lem:conv_iff_R_nontrivial}.

\begin{definition}
Let $(x,t)\in\cS_\SDP$ and let $\cG(x,t)$ be the minimal face of $\Gamma^\circ$ containing $q(x)-2te_\obj$. Define
\begin{align*}
&\cR'(x,t) \coloneqq \set{(x',t')\in\R^{n+1}:\, 
q(x+\alpha x') - 2(t+\alpha t') e_\obj \in\spann(\cG(x,t)),\forall \alpha\in\R
}. 
\qedhere
\end{align*}
\end{definition}
To motivate $\cR'(x,t)$, note that by \cref{def:R,lem:S_SDP}, $(x',t')\in\cR(x,t)$ if and only if $\exists\epsilon>0$ such that
\begin{align*}
    q(x+\alpha x') -2 (t+ \alpha t')e_\obj\in\Gamma^\circ,\quad\forall \alpha\in[-\epsilon,\epsilon].
\end{align*}
If we assume (incorrectly) that $q(x+\alpha x')-2(t+\alpha t')e_\obj$ is linear in $\alpha$, then
this holds if and only if
\begin{align*}
    q(x+\alpha x') - 2(t+\alpha t')e_\obj \in \spann(\cG(x,t))
\end{align*}
for all $\alpha\in[-\epsilon,\epsilon]$ (in fact, also for all $\alpha\in \R$). This is precisely the definition of $\cR'(x,t)$. Thus, $\cR'(x,t)$ is a ``first-order approximation'' to $\cR(x,t)$ in this sense.
We will later give additional motivation for $\cR'(x,t)$ when $\Gamma^\circ$ is facially exposed.

Below, we show that $\cR'(x,t)\subseteq \cR(x,t)$.
\begin{lemma}
\label{lem:R'_subset}
Suppose \cref{as:definite} holds and $(x,t)\in\cS_\SDP$. Then, $\cR'(x,t)\subseteq\cR(x,t)$.
\end{lemma}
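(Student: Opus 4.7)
The plan is to exploit the fact that the natural parameterization $\alpha \mapsto q(x+\alpha x') - 2(t+\alpha t')e_\obj$ defines a continuous (polynomial) curve in $\R^{1+m}$ that, by the definition of $\cR'(x,t)$, is confined to $\spann(\cG(x,t))$, and that passes through a point in $\rint(\cG(x,t))$ at $\alpha=0$. Since $\cG(x,t)$ is a face of $\Gamma^\circ$, hence a convex cone containing the origin, its span equals its affine hull, so the relative interior behaves as an ordinary interior within $\spann(\cG(x,t))$.

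More concretely, fix $(x',t') \in \cR'(x,t)$ and define the continuous map
\begin{align*}
p(\alpha) \coloneqq q(x+\alpha x') - 2(t + \alpha t')e_\obj.
\end{align*}
First I would invoke \cref{fact:q_in_rint_G} to conclude $p(0) = q(x)-2te_\obj \in \rint(\cG(x,t))$. Next I would note that, because $\cG(x,t)$ is a convex set with $\aff(\cG(x,t)) = \spann(\cG(x,t))$, the definition of relative interior provides a $\delta>0$ such that every point of $\spann(\cG(x,t))$ within distance $\delta$ of $p(0)$ lies in $\cG(x,t)$.

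Then, by the definition of $\cR'(x,t)$, $p(\alpha)\in\spann(\cG(x,t))$ for all $\alpha\in\R$, and by continuity of $p$ at $\alpha=0$, there is some $\epsilon>0$ for which $\|p(\alpha)-p(0)\|<\delta$ whenever $|\alpha|\leq \epsilon$. Combining these two facts, $p(\alpha)\in \cG(x,t) \subseteq \Gamma^\circ$ for all $\alpha\in[-\epsilon,\epsilon]$. By \cref{lem:S_SDP}, this says precisely that $(x+\alpha x',t+\alpha t')\in\cS_\SDP$ for all $|\alpha|\leq \epsilon$, i.e., $[(x,t)\pm \epsilon(x',t')]\subseteq \cS_\SDP$, so $(x',t')\in\cR(x,t)$.

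I do not anticipate a substantive obstacle; the only subtlety is the clean identification $\aff(\cG(x,t))=\spann(\cG(x,t))$ (which uses that faces of cones contain $0$) so that \cref{fact:q_in_rint_G}'s relative-interior guarantee becomes an open-ball guarantee within $\spann(\cG(x,t))$. After that, continuity of the polynomial curve $p(\alpha)$ closes the argument.
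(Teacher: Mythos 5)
Your proposal is correct and follows essentially the same route as the paper: both use \cref{fact:q_in_rint_G} to place $q(x)-2te_\obj$ in $\rint(\cG(x,t))$, note that the curve $\alpha\mapsto q(x+\alpha x')-2(t+\alpha t')e_\obj$ stays in $\spann(\cG(x,t))$ by definition of $\cR'(x,t)$, and conclude by continuity that it remains in $\cG(x,t)\subseteq\Gamma^\circ$ for small $\alpha$, then invoke the third characterization in \cref{lem:S_SDP}. You merely spell out the detail (correctly) that $\aff(\cG(x,t))=\spann(\cG(x,t))$ because faces of a cone contain the origin, which the paper leaves implicit.
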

\begin{proof}
Let $(x,t)\in\cS_\SDP$ and $(x',t')\in\cR'(x,t)$. By continuity and the fact that $q(x) - 2te_\obj \in \rint(\cG(x,t))$ (by \cref{fact:q_in_rint_G}), there exists $\alpha>0$ such that for all $\epsilon\in[\pm\alpha]$ we have 
$ q(x+ \epsilon x') - 2(t + \epsilon t') e_\obj \in\cG(x,t)\subseteq \Gamma^\circ .
$ By the third characterization of $\cS_\SDP$ in  \cref{lem:S_SDP}, we have that $[(x,t)\pm\alpha (x',t')]\subseteq \cS_\SDP$.
\end{proof}

\Cref{lem:conv_iff_R_nontrivial,lem:R'_subset} immediately imply the following sufficient condition for convex hull exactness.
\begin{theorem}
\label{thm:conv_hull_sufficient}
Suppose \cref{as:definite} holds and that for all $(x,t)\in\cS_\SDP\setminus\cS$, the set $\cR'(x,t)$ is nontrivial. Then, $\conv(\cS) = \cS_\SDP$.
\end{theorem}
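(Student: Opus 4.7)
The plan is to prove \cref{thm:conv_hull_sufficient} as an immediate corollary of \cref{lem:conv_iff_R_nontrivial} and \cref{lem:R'_subset}, chaining the two implications together. The whole content of the theorem is that the ``first-order approximation'' $\cR'(x,t)$ is enough to certify the nontriviality condition of \cref{lem:conv_iff_R_nontrivial}, and both lemmas have already been established above.

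Concretely, first I would fix an arbitrary $(x,t) \in \cS_\SDP \setminus \cS$. By hypothesis, $\cR'(x,t)$ is nontrivial, meaning it contains some nonzero $(x',t') \in \R^{n+1}$. By \cref{lem:R'_subset}, we have $\cR'(x,t) \subseteq \cR(x,t)$, so this same $(x',t')$ is a nonzero element of $\cR(x,t)$. Unpacking \cref{def:R}, this means precisely that there exists $\epsilon > 0$ with $[(x,t) \pm \epsilon (x',t')] \subseteq \cS_\SDP$.

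Next I would invoke \cref{lem:conv_iff_R_nontrivial}: the equivalence stated there says that $\conv(\cS) = \cS_\SDP$ if and only if, at every point $(x,t)\in\cS_\SDP\setminus\cS$, such a nonzero rounding direction exists. Since we have just exhibited one at an arbitrary such $(x,t)$, the equivalence immediately yields $\conv(\cS) = \cS_\SDP$, completing the proof.

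There is essentially no obstacle here, as the work has been front-loaded into the two supporting lemmas; the only ``choice'' is to note that the rounding direction exhibited in the definition of $\cR'$ via \cref{lem:R'_subset} is the same object hypothesized by \cref{lem:conv_iff_R_nontrivial}. The deeper content (motivating why $\cR'(x,t)$ is a reasonable first-order surrogate for $\cR(x,t)$ and why it is actually contained in $\cR(x,t)$) has already been carried out in the proof of \cref{lem:R'_subset} via the fact that $q(x) - 2te_\obj \in \rint(\cG(x,t))$ together with continuity, which lets one pass from membership in $\spann(\cG(x,t))$ to membership in $\cG(x,t) \subseteq \Gamma^\circ$ for small perturbations.
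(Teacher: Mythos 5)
Your proposal is correct and follows exactly the paper's route: the paper also derives \cref{thm:conv_hull_sufficient} immediately by combining \cref{lem:conv_iff_R_nontrivial} with the inclusion $\cR'(x,t)\subseteq\cR(x,t)$ from \cref{lem:R'_subset}. Your write-up simply spells out the chaining that the paper leaves as ``immediate,'' with no gaps.
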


The condition of \cref{thm:conv_hull_sufficient} on the surface requires checking infinitely many systems of infinitely many constraints.
Nonetheless, it is possible to check these conditions in some situations where $\Gamma$ is sufficiently simple. See \cref{sec:apps}.

\subsection{Necessary conditions for convex hull exactness}
We now show that, under a technical assumption (\cref{as:facially_exposed}), $\cR'(x,t)=\cR(x,t)$ so that the sufficient condition of \cref{thm:conv_hull_sufficient} is also necessary.

\begin{assumption}
\label{as:facially_exposed}
Suppose $\Gamma^\circ$ is facially exposed, i.e., every face of $\Gamma^\circ$ is exposed.
\end{assumption}

This assumption holds for any conic slice of the nonnegative orthant, the second-order cone, or the PSD cone. See \cite{pataki2013connection} for a discussion of this assumption and its connections to the \textit{nice cones}. In general, all \textit{nice cones} are facially exposed.

We will use the following property of exposed faces.
\begin{fact}[\cite{barker1981theory}]
\label{fact:exposed_double_conjugate}
A face $\cG\faceeq\Gamma^\circ$ is exposed if and only if $\cG = (\cG^\triangle)^\triangle$.
\end{fact}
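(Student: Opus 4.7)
The plan is to prove both implications separately using standard polar-duality arguments. Throughout, one uses only that $\Gamma$ is a closed convex cone (so that $\Gamma = (\Gamma^\circ)^\circ$) and that the conjugate operation $\cdot^\triangle$ is inclusion-reversing.

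For the forward direction, assume $\cG$ is exposed, so there exists $y\in\Gamma$ with $\cG = \Gamma^\circ \cap y^\perp$. The inclusion $\cG \subseteq (\cG^\triangle)^\triangle$ holds for \emph{any} face (not only exposed ones): by definition $\cG^\triangle = \Gamma \cap \cG^\perp$, so every $z\in\cG^\triangle$ satisfies $\ip{z,g} = 0$ for $g\in\cG$; combined with $g\in\Gamma^\circ$, this gives $g \in \Gamma^\circ \cap (\cG^\triangle)^\perp = (\cG^\triangle)^\triangle$. For the reverse inclusion, $y\in\Gamma$ together with $\ip{y,g} = 0$ for $g\in\cG$ yields $y\in\cG^\triangle$; hence $(\cG^\triangle)^\perp \subseteq y^\perp$ and so $(\cG^\triangle)^\triangle \subseteq \Gamma^\circ \cap y^\perp = \cG$.

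For the reverse direction, assume $\cG = (\cG^\triangle)^\triangle$ and pick any $y^* \in \rint(\cG^\triangle)$, which exists because $\cG^\triangle$ is nonempty (faces are nonempty by the paper's convention). I would show $\Gamma^\circ \cap (y^*)^\perp = \cG$, which directly exhibits $y^*\in\Gamma$ as an exposing vector. One inclusion is immediate since $y^*\in\cG^\triangle$ forces $\ip{y^*,g} = 0$ for $g\in\cG$. For the other, take $g\in\Gamma^\circ$ with $\ip{y^*,g} = 0$ and show that $g$ is orthogonal to every $y'\in\cG^\triangle$: since $y^*$ lies in the relative interior of $\cG^\triangle$, for each $y'\in\cG^\triangle$ there exists $\mu>0$ with $y'' \coloneqq (1+\mu)y^* - \mu y' \in \cG^\triangle \subseteq \Gamma$; then $\ip{y',g}\leq 0$ and $\ip{y'',g}\leq 0$, but their weighted sum equals $(1+\mu)\ip{y^*,g} = 0$, forcing both to vanish. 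Thus $g \in (\cG^\triangle)^\perp\cap \Gamma^\circ = (\cG^\triangle)^\triangle = \cG$.

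The only genuinely nontrivial step is this relative-interior argument in the reverse direction: it is the sole place where convex analysis (rather than bookkeeping with the defining equations $\cG^\triangle = \Gamma\cap\cG^\perp$ and $(\cG^\triangle)^\triangle = \Gamma^\circ \cap (\cG^\triangle)^\perp$) enters, and it relies on the line-extension property of relative interiors to upgrade a single orthogonality condition at $y^*$ to a face-wide one on $\cG^\triangle$. Every other step is a one-line manipulation of the definitions.
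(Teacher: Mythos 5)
Your proof is correct. Note, however, that the paper does not prove this statement at all: it is stated as a fact and attributed to \cite{barker1981theory}, so there is no in-paper argument to compare against; what you have written is the standard polarity argument and it goes through. Two small remarks. First, your relative-interior step in the reverse direction is exactly the standard fact already built into \cref{def:conjugate_face}, namely that for a face $\cF\faceeq\Gamma$ and $y^*\in\rint(\cF)$ one has $\Gamma^\circ\cap\cF^\perp=\Gamma^\circ\cap(y^*)^\perp$; applying this with $\cF=\cG^\triangle$ gives $(\cG^\triangle)^\triangle=\Gamma^\circ\cap(y^*)^\perp$ immediately, so the reverse direction is a one-liner if you are willing to take the definition's two descriptions as equivalent (your argument is in effect a proof of that equivalence, which makes the write-up more self-contained but not shorter). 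Second, two cosmetic points: the bipolar identity $\Gamma=(\Gamma^\circ)^\circ$ is never actually used anywhere in your argument, and the nonemptiness of $\cG^\triangle$ needs no appeal to conventions, since $0\in\Gamma\cap\cG^\perp$ always (with the degenerate case $\cG^\triangle=\set{0}$, i.e.\ $\cG=\Gamma^\circ$, handled by the exposing vector $y^*=0$, which your argument covers). Neither point is a gap.
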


We provide further intuition to the definition of $\cR'(x,t)$ under \cref{as:facially_exposed}. Suppose $(x',t')\in\cR(x,t)$.
Noting that $\cF(x,t)$ captures the convex quadratic inequalities defining $\cS_\SDP$ that are \emph{tight} at $(x,t)$, we deduce that $\ip{(f_\obj,f), q(x+\alpha x') - 2(t + \alpha t') e_\obj}$ must be zero for all $(f_\obj, f)\in\cF(x,t)$ and all small enough $\alpha$. In other words, a necessary condition for $(x',t')\in\cR'(x,t)$ is that
\begin{align*}
    q(x+\alpha x') - 2(t + \alpha t') e_\obj \in \Gamma \cap \cF(x,t)^\perp
\end{align*}
for all small enough $\alpha$. Under, \cref{as:facially_exposed} we have that $\Gamma \cap \cF(x,t)^\perp = \cG(x,t)$.
This same reasoning is presented formally in the lemma below.
\begin{lemma}
\label{lem:R'_equal}
Suppose \cref{as:definite,as:facially_exposed} hold and let $(x,t)\in\cS_\SDP$. Then, $\cR'(x,t) = \cR(x,t)$.
\end{lemma}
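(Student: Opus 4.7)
The plan is to upgrade \cref{lem:R'_subset} to an equality, i.e., to establish the reverse containment $\cR(x,t)\subseteq\cR'(x,t)$. I would fix $(x',t')\in\cR(x,t)$ and consider the vector-valued polynomial $p(\alpha)\coloneqq q(x+\alpha x')-2(t+\alpha t')e_\obj\in\R^{1+m}$, which has degree at most $2$ in $\alpha$. By the definition of $\cR(x,t)$ and the third characterization in \cref{lem:S_SDP}, there exists $\epsilon>0$ with $p(\alpha)\in\Gamma^\circ$ for every $\alpha\in[-\epsilon,\epsilon]$. The goal is to show that $p(\alpha)\in\spann(\cG(x,t))$ for every $\alpha\in\R$, which is exactly the defining condition of $\cR'(x,t)$.

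The crux would be to prove that for every $(f_\obj,f)\in\cF(x,t)$ the scalar polynomial $\phi(\alpha)\coloneqq\ip{(f_\obj,f),p(\alpha)}$ vanishes identically on $\R$. To this end I would assemble: (i) $\phi$ is a polynomial of degree at most $2$ with leading coefficient $x'^\top A(f_\obj,f)x'\geq 0$, because $(f_\obj,f)\in\Gamma$ forces $A(f_\obj,f)\succeq 0$, so $\phi$ is \emph{convex} on all of $\R$; (ii) $\phi(\alpha)\leq 0$ on $[-\epsilon,\epsilon]$ since $(f_\obj,f)\in\Gamma$ and $p(\alpha)\in\Gamma^\circ$ there; and (iii) $\phi(0)=0$ because $(f_\obj,f)\in\cF(x,t)=\Gamma\cap(q(x)-2te_\obj)^\perp$ by \cref{fact:q_in_rint_G}. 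Applying convexity at the midpoint of $\pm\alpha$ together with $\phi(\pm\alpha)\leq 0$ and $\phi(0)=0$ forces $\phi(\alpha)=\phi(-\alpha)=0$ on $[-\epsilon,\epsilon]$, and a polynomial of degree at most $2$ vanishing on an interval must be identically zero on $\R$. So $p(\alpha)\in\cF(x,t)^\perp$ for every $\alpha\in\R$.

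Next I would invoke facial exposedness to convert this into a statement about $\cG(x,t)$. By \cref{as:facially_exposed} and \cref{fact:exposed_double_conjugate}, $\cG(x,t)=(\cG(x,t)^\triangle)^\triangle=\cF(x,t)^\triangle=\Gamma^\circ\cap\cF(x,t)^\perp$. For $\alpha\in[-\epsilon,\epsilon]$ the conditions $p(\alpha)\in\Gamma^\circ$ and $p(\alpha)\in\cF(x,t)^\perp$ combine to give $p(\alpha)\in\cG(x,t)\subseteq\spann(\cG(x,t))$. Since $p$ is a polynomial of degree at most $2$, its three coefficients are determined by any three distinct evaluations; evaluating at three distinct points in $[-\epsilon,\epsilon]$ places all three coefficients inside the linear subspace $\spann(\cG(x,t))$, so $p(\alpha)\in\spann(\cG(x,t))$ for every $\alpha\in\R$, as required.

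The hard part I anticipate is the convexity-based upgrade from the one-sided bound $\phi\leq 0$ on $[-\epsilon,\epsilon]$ to the equality $\phi\equiv 0$ on all of $\R$. This is the only step where genuine membership in $\Gamma$ (as opposed to just $\R_+^{1+m}$) is essential, since it is precisely the PSD-ness of $A(f_\obj,f)$ that makes $\phi$ convex and thereby promotes the inequality to equality. Without convexity, one could only conclude $p(\alpha)\in\cF(x,t)^\circ$, which is strictly weaker than $p(\alpha)\in\cF(x,t)^\perp$, and the subsequent intersection with $\Gamma^\circ$ would no longer land inside $\cG(x,t)$.
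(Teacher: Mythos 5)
Your proposal is correct and follows essentially the same route as the paper's proof: convexity of $\alpha\mapsto\ip{(f_\obj,f),q(x+\alpha x')-2(t+\alpha t')e_\obj}$ (from $A(f_\obj,f)\succeq 0$), together with nonpositivity on the interval and vanishing at $\alpha=0$, forces the map to vanish identically, and facial exposedness via \cref{fact:exposed_double_conjugate} then places $q(x+\alpha x')-2(t+\alpha t')e_\obj$ in $\cG(x,t)$ on the interval, extending to all $\alpha\in\R$ by the degree-$2$ polynomial argument. The only cosmetic differences are that the paper fixes a single $(f_\obj,f)\in\rint(\cF(x,t))$ exposing $\cG(x,t)$ rather than ranging over all of $\cF(x,t)$, and extends from the interval to $\R$ by testing against functionals in $\spann(\cG(x,t))^\perp$ rather than by interpolation.
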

\begin{proof}
Fix $(x',t')\in\cR(x,t)$. As $\cR(x,t)$ is a convex cone, we may without loss of generality assume that $[(x,t)\pm (x',t')]\subseteq \cS_\SDP$. Our goal is to show that $(x',t')\in\cR'(x,t)$, i.e., that
\begin{align*}
q(x\!+\!\alpha x') - 2(t\!+\!\alpha t')e_\obj \!\in\!\spann(\cG(x,t)),\,\forall \alpha\in\R.
\end{align*}
As each coordinate of this vector is quadratic in $\alpha$, it suffices to show instead that the inclusion holds for all $\alpha\in[-1,1]$.
Indeed, suppose this inclusion has been shown for all $\alpha\in[-1,1]$ and let $\ell\in \spann(\cG(x,t))^\perp$. Then, $\ip{\ell, q(x+\alpha x') - 2(t+\alpha t')e_\obj}$ is a quadratic function in $\alpha$ mapping the interval $[-1,1]$ to $0$. Thus, it must map all of $\R$ to zero. As $\ell\in \spann(\cG(x,t))^\perp$ was arbitrary, we deduce that $q(x+\alpha x') - 2(t+\alpha t')e_\obj \in\spann(\cG(x,t))$ for all $\alpha\in\R$.

Recall that $\cF(x,t)= \Gamma \cap (q(x) - 2t e_\obj)^\perp$ and let $(f_\obj, f)\in\rint(\cF(x,t))$. By \cref{as:facially_exposed,fact:exposed_double_conjugate}, we may write $\cG(x,t) = \Gamma^\circ \cap (f_\obj, f)^\perp$.
As $[(x,t)\pm (x',t')]\subseteq \cS_\SDP$, we have that $q(x+\alpha x') - 2(t+\alpha t')e_\obj\in \Gamma^\circ$ for all $\alpha \in[-1,1]$. It remains to verify that the map
\begin{align*}
\alpha\mapsto
\ip{
(f_\obj,f),
q(x+\alpha x') - 2(t+\alpha t')e_\obj}
\end{align*}
evaluates to zero on $\alpha \in[-1,1]$.
Again, as $[(x,t)\pm (x',t')]\subseteq \cS_\SDP$, this map is nonpositive for all $\alpha\in[-1,1]$.
Next, note that $(f_\obj,f)\in\cF(x,t) = \Gamma \cap (q(x) - 2te_\obj)^\perp$ so that this map evaluates to zero at $\alpha = 0$. Finally, $(f_\obj,f) \in \Gamma$ implies that this map is also convex. We conclude that this map is identically zero.
\end{proof}

The following necessary and sufficient condition for convex hull exactness then follows from \cref{lem:R'_equal}.
\begin{theorem}
\label{thm:conv_nec}
Suppose \cref{as:definite,as:facially_exposed} hold. Then, $\conv(\cS) = \cS_\SDP$ if and only if for all $(x,t)\in\cS_\SDP\setminus\cS$, the set $\cR'(x,t)$ is nontrivial.
\end{theorem}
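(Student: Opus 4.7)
The plan is to observe that this theorem is essentially a bookkeeping combination of three earlier results: \cref{lem:conv_iff_R_nontrivial}, \cref{thm:conv_hull_sufficient}, and \cref{lem:R'_equal}. All of the substantive work, including every use of \cref{as:facially_exposed}, has already been done in \cref{lem:R'_equal}, which upgrades the inclusion $\cR'(x,t)\subseteq \cR(x,t)$ to an equality via \cref{fact:exposed_double_conjugate}.

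For the sufficient (``if'') direction, I would simply invoke \cref{thm:conv_hull_sufficient}: if $\cR'(x,t)$ is nontrivial for every $(x,t)\in\cS_\SDP\setminus\cS$, then convex hull exactness holds. Note that this direction does not require \cref{as:facially_exposed} at all.

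For the necessary (``only if'') direction, I would fix $(x,t)\in\cS_\SDP\setminus\cS$ and argue by contrapositive-free direct deduction. Assume $\conv(\cS)=\cS_\SDP$. By \cref{lem:conv_iff_R_nontrivial}, there exist a nonzero $(x',t')\in\R^{n+1}$ and an $\epsilon>0$ with $[(x,t)\pm \epsilon(x',t')]\subseteq \cS_\SDP$; by definition of $\cR(x,t)$, this says $(x',t')\in\cR(x,t)\setminus\{0\}$, so $\cR(x,t)$ is nontrivial. Now I apply \cref{lem:R'_equal}, which uses \cref{as:definite,as:facially_exposed} to conclude $\cR'(x,t)=\cR(x,t)$. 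Therefore $\cR'(x,t)$ is also nontrivial, as required.

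Since both halves are immediate once the preceding lemmas are in hand, I do not anticipate any real obstacle in writing the proof; the only choice to make is stylistic, namely whether to cite \cref{thm:conv_hull_sufficient} for the ``if'' direction or instead rederive it inline from \cref{lem:conv_iff_R_nontrivial} and \cref{lem:R'_subset} to emphasize symmetry with the ``only if'' argument. I would opt for the former, since \cref{thm:conv_hull_sufficient} is already stated as a standalone result. The genuine difficulty was concentrated in \cref{lem:R'_equal}, where facial exposedness was needed to write $\cG(x,t)=\Gamma^\circ \cap (f_\obj,f)^\perp$ for $(f_\obj,f)\in\rint(\cF(x,t))$ and thereby convert the necessary condition $[(x,t)\pm(x',t')]\subseteq \cS_\SDP$ into the algebraic condition defining $\cR'(x,t)$.
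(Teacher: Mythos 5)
Your proposal is correct and matches the paper's (implicit) argument: the paper derives \cref{thm:conv_nec} directly from \cref{lem:R'_equal} together with \cref{lem:conv_iff_R_nontrivial}, exactly the combination you describe. Your added remark that \cref{as:facially_exposed} is needed only for the ``only if'' direction is accurate and consistent with how the paper structures \cref{thm:conv_hull_sufficient} versus \cref{lem:R'_equal}.
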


\subsection{Explicit descriptions of $\cR'(x,t)$}

\cref{lem:R'_if_epigraph_tight,prop:R'_exposed} below give alternate descriptions of $\cR'(x,t)$ (see \cref{lem:R'_if_epigraph_tight,prop:R'_exposed})
that may be useful in verifying the conditions of \cref{thm:conv_hull_sufficient,thm:conv_nec}.
See, for example, their use in \cref{sec:apps}.
We begin by making an observation and then proving \cref{lem:R'_if_epigraph_tight,prop:R'_exposed}.

\begin{observation}
\label{obs:epi_tight_clcone}
Suppose \cref{as:definite} holds. Let $(x,t)\in\cS_\SDP$ where $2t = \sup_{\gamma\in\Gamma_1} [\gamma,q(x)]$. Then, $\spann(\cG(x,t))\not\supseteq \R\times 0_m$.
In particular, $\cG(x,t)^\perp = \spann\left(\cG(x,t)^\perp \cap \set{\gamma_\obj = 1}\right)$.
\end{observation}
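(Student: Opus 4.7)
The plan is to prove the two statements separately, with the first containment claim being the main substance. I will argue the contrapositive of the first claim by contradiction, and then derive the ``in particular'' statement by a short subspace manipulation.

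For the first claim, I suppose for contradiction that $e_\obj \in \spann(\cG(x,t))$. Since $\cG(x,t)$ is a convex cone containing the origin, $\spann(\cG(x,t)) = \aff(\cG(x,t))$. By \cref{fact:q_in_rint_G}, $q(x) - 2te_\obj \in \rint(\cG(x,t))$, so there exists $\epsilon > 0$ such that the perturbation
\[
\bigl(q(x) - 2te_\obj\bigr) + \epsilon e_\obj \;=\; q(x) - 2(t - \tfrac{\epsilon}{2}) e_\obj
\]
still lies in $\cG(x,t) \subseteq \Gamma^\circ$. By the characterization of $\cS_\SDP$ in \cref{lem:S_SDP}, this membership forces $[\gamma, q(x)] \leq 2(t - \epsilon/2)$ for every $\gamma \in \Gamma_1$, so $\sup_{\gamma \in \Gamma_1}[\gamma,q(x)] \leq 2t - \epsilon < 2t$. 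This contradicts the hypothesis $2t = \sup_{\gamma\in\Gamma_1}[\gamma, q(x)]$, so $e_\obj \notin \spann(\cG(x,t))$, i.e., $\spann(\cG(x,t)) \not\supseteq \R \times 0_m$.

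For the ``in particular'' claim, let $V \coloneqq \cG(x,t)^\perp$, which is a linear subspace. Since $e_\obj \notin \spann(\cG(x,t)) = V^\perp$, there exists some $v \in V$ with $v_\obj \neq 0$; after rescaling assume $v_\obj = 1$, so $v \in V \cap \set{\gamma_\obj = 1}$. Given any $w \in V$, set $u \coloneqq w - w_\obj v \in V$ so that $u_\obj = 0$. Then both $u + v$ and $v$ lie in $V \cap \set{\gamma_\obj = 1}$ and $u = (u+v) - v$, which means $u \in \spann\bigl(V \cap \set{\gamma_\obj = 1}\bigr)$. Therefore $w = u + w_\obj v \in \spann\bigl(V \cap \set{\gamma_\obj = 1}\bigr)$, giving the claimed equality (the reverse inclusion is trivial).

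The only nontrivial step is translating $e_\obj \in \spann(\cG(x,t))$ into the perturbation being in $\cG(x,t)$, which is just the standard property of the relative interior of a convex cone; the rest is linear algebra. I do not anticipate any real obstacle.
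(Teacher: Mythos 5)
Your proof is correct and takes essentially the same approach as the paper: the paper deduces from $e_\obj\in\spann(\cG(x,t))$ that $(0_n,1)\in\cR'(x,t)$ and invokes \cref{lem:R'_subset} to lower $t$ while staying in $\cS_\SDP$, and your relative-interior perturbation of $q(x)-2te_\obj$ in the $e_\obj$ direction is exactly the argument underlying that lemma, just inlined. The linear-algebra step for the ``in particular'' claim matches the paper's as well.
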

\begin{proof}
    Suppose $\spann(\cG(x,t))\supseteq \R\times 0_m$ so that $(0_n, 1) \in\cR'(x,t)$. By \cref{lem:R'_subset}, there exists $\alpha>0$ so that  $(x,t - \alpha)\in\cS_\SDP$. This contradicts $2t = \sup_{\gamma\in\Gamma_1} [\gamma,q(x)]$. 
Thus, $\spann(\cG(x,t))\not\supseteq\R\times0_m$. Equivalently, $\cG(x,t)^\perp \not\subseteq 0\times \R^m$ and there exists $(1,\bar\gamma)\in\cG(x,t)^\perp$.
    Then, for any $(\gamma_\obj,\gamma)\in\cG(x,t)^\perp$, we can write $(\gamma_\obj,\gamma)$ as a linear combination of
    $(\gamma_\obj,\gamma) + (1-\gamma_\obj) (1,\bar\gamma)$
    and
    $(1,\bar\gamma)$.
    \end{proof}

\begin{proposition}
\label{lem:R'_if_epigraph_tight}
Suppose \cref{as:definite} holds and $(x,t)\in\cS_\SDP$ with $2t=\sup_{\gamma\in\Gamma_1}[\gamma,q(x)]$. Then,
\begin{align*}
\cR'(x,t) &= \set{(x',t')\in\R^{n+1}:\,\begin{array}
	{l}
	(x')^\top A[\gamma] x' = 0  ,\,\forall (1,\gamma)\in\cG(x,t)^\perp\\
	\ip{A[\gamma]x + b[\gamma], x'} - t' = 0,\,\forall (1,\gamma)\in\cG(x,t)^\perp
\end{array}}.
\end{align*}
\end{proposition}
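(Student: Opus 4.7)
The plan is to reduce the condition defining $\cR'(x,t)$—that a curve lies in $\spann(\cG(x,t))$ for all $\alpha\in\R$—to a collection of polynomial identities in $\alpha$ indexed by $\cG(x,t)^\perp$, and then to read off the coefficients.

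First, I would use the duality between a linear subspace and its annihilator: $v\in\spann(\cG(x,t))$ if and only if $\ip{\ell,v}=0$ for every $\ell\in\spann(\cG(x,t))^\perp=\cG(x,t)^\perp$. Applied to $v=q(x+\alpha x')-2(t+\alpha t')e_\obj$, this rewrites $(x',t')\in\cR'(x,t)$ as the statement
\[
\ip{(\gamma_\obj,\gamma),\,q(x+\alpha x')-2(t+\alpha t')e_\obj}=0,\quad\forall(\gamma_\obj,\gamma)\in\cG(x,t)^\perp,\ \forall\alpha\in\R.
\]

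Second, I would invoke \cref{obs:epi_tight_clcone} (whose hypothesis $2t=\sup_{\gamma\in\Gamma_1}[\gamma,q(x)]$ matches ours) to reduce from $\cG(x,t)^\perp$ to its slice $\cG(x,t)^\perp\cap\{\gamma_\obj=1\}$: since the former is spanned by the latter and the identity is linear in $(\gamma_\obj,\gamma)$, it suffices to test against $(1,\gamma)\in\cG(x,t)^\perp$. For such $\gamma$, expanding using the definitions of $A[\gamma],b[\gamma],c[\gamma]$ yields
\[
\ip{(1,\gamma),q(x+\alpha x')-2(t+\alpha t')e_\obj}=\bigl([\gamma,q(x)]-2t\bigr)+2\alpha\bigl(\ip{A[\gamma]x+b[\gamma],x'}-t'\bigr)+\alpha^2(x')^\top A[\gamma]x'.
\]
Vanishing for all $\alpha\in\R$ is equivalent to the simultaneous vanishing of all three coefficients.

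Finally, the constant coefficient is automatically zero: \cref{fact:q_in_rint_G} gives $q(x)-2te_\obj\in\rint(\cG(x,t))\subseteq\spann(\cG(x,t))$, which is orthogonal to every element of $\cG(x,t)^\perp$, and in particular $[\gamma,q(x)]-2t=0$ for every $(1,\gamma)\in\cG(x,t)^\perp$. What remains are precisely the two conditions listed in the proposition, proving the claimed equality. The only nontrivial step is the reduction in the second paragraph, which is why the hypothesis $2t=\sup_{\gamma\in\Gamma_1}[\gamma,q(x)]$ (channeling \cref{obs:epi_tight_clcone}) is needed; the rest is a straightforward polynomial expansion.
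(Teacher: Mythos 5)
Your proposal is correct and follows essentially the same route as the paper: test membership in $\spann(\cG(x,t))$ against $\cG(x,t)^\perp$, expand the pairing as a quadratic in $\alpha$, note the constant term vanishes since $q(x)-2te_\obj\in\cG(x,t)$, and use \cref{obs:epi_tight_clcone} to pass to the slice $\gamma_\obj=1$. The only difference is cosmetic ordering (you invoke the observation before expanding, the paper after), which does not change the argument.
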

\begin{proof}
    Note that $(x',t')\in\cR'(x,t)$ if and only if for all $(\gamma_\obj,\gamma)\in\cG(x,t)^\perp$, the expression
    \begin{align*}
    &\ip{(\gamma_\obj,\gamma), q(x + \alpha x') - 2(t+\alpha t')e_\obj}\\
    &\quad = 
    \alpha^2 (x')^\top A(\gamma_\obj,\gamma) (x') \\
    &\qquad + 2\alpha\left(\ip{A(\gamma_\obj,\gamma) x + b(\gamma_\obj,\gamma), x'}- t'\right) \\
    &\qquad + \ip{(\gamma_\obj,\gamma), q(x) - 2te_\obj}
    \end{align*}
    is identically zero in $\alpha$. This occurs if and only if for all $(\gamma_\obj,\gamma)\in\cG(x,t)^\perp$,
    \begin{align*}
    &(x')^\top A(\gamma_\obj,\gamma) x' = 0,\quad\text{and}\\& \ip{A(\gamma_\obj,\gamma) x + b(\gamma_\obj,\gamma), x'} - t' = 0.
    \end{align*}
    The result then follows from \cref{obs:epi_tight_clcone}.
    \end{proof}

\begin{proposition}
\label{prop:R'_exposed}
	Suppose \cref{as:definite,as:facially_exposed} hold. Let $(x,t)\in\cS_\SDP$ where $2t = \sup_{\gamma\in\Gamma_1}[\gamma,q(x)]$. Let $(1,f)\in\rint(\cF(x,t))$. Then,
	\begin{align*}
	&\cR'(x,t) = \\
	&\set{(x',t')\in\R^{n+1}:\, \begin{array}
		{l}
		x' \in\ker(A[f])\\
		\ip{A[\eta]x + b[\eta],x'} - t' = 0 ,\,\forall (1,\eta)\in\cG(x,t)^\perp
	\end{array}}.
	\end{align*}
\end{proposition}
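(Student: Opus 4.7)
My plan is to reduce to \cref{lem:R'_if_epigraph_tight}. Since the two proposed descriptions of $\cR'(x,t)$ share identical second conditions (namely $\ip{A[\eta]x+b[\eta],x'}-t'=0$ for all $(1,\eta)\in\cG(x,t)^\perp$), it suffices to show that under \cref{as:facially_exposed}, and with $(1,f)\in\rint(\cF(x,t))$,
\[ (x')^\top A[\gamma] x' = 0,\,\forall (1,\gamma)\in\cG(x,t)^\perp \quad \Longleftrightarrow \quad x'\in\ker(A[f]).\]
The $(\Rightarrow)$ direction is essentially free: $(1,f)\in\cF(x,t)=\Gamma\cap\cG(x,t)^\perp$, so specializing the left-hand condition to $\gamma=f$, and using $A[f]\succeq 0$ (because $(1,f)\in\Gamma$), forces $x'\in\ker(A[f])$.

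The content is the $(\Leftarrow)$ direction, which I would prove in two steps. \textbf{Step 1:} Fix any $(\gamma_\obj,\gamma)\in\cF(x,t)$. Since $(1,f)\in\rint(\cF(x,t))$ and $\cF(x,t)$ is a convex cone, there exists $\epsilon>0$ with $(1,f)-\epsilon(\gamma_\obj,\gamma)\in\cF(x,t)\subseteq\Gamma$, hence $A[f]-\epsilon A(\gamma_\obj,\gamma)\succeq 0$. Combined with $A(\gamma_\obj,\gamma)\succeq 0$, the sandwich $0\le \epsilon (x')^\top A(\gamma_\obj,\gamma)x'\le (x')^\top A[f] x' = 0$ forces $(x')^\top A(\gamma_\obj,\gamma)x'=0$. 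Introducing the vector $w\in\R^{1+m}$ with $w_i=(x')^\top A_i x'$, so that $\ip{w,(\gamma_\obj,\gamma)}=(x')^\top A(\gamma_\obj,\gamma)x'$, this says $-w\in\cF(x,t)^\perp$. \textbf{Step 2:} Since $A(\gamma_\obj,\gamma)\succeq 0$ on all of $\Gamma$, we also have $-w\in\Gamma^\circ$. Hence $-w\in\Gamma^\circ\cap\cF(x,t)^\perp$. Here is the decisive use of \cref{as:facially_exposed}: by \cref{fact:exposed_double_conjugate}, $\cG(x,t)=(\cG(x,t)^\triangle)^\triangle=\cF(x,t)^\triangle=\Gamma^\circ\cap\cF(x,t)^\perp$, so $-w\in\cG(x,t)\subseteq\spann(\cG(x,t))=(\cG(x,t)^\perp)^\perp$. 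Therefore $\ip{w,v}=0$ for every $v\in\cG(x,t)^\perp$; specializing to $v=(1,\gamma)$ yields $(x')^\top A[\gamma] x'=0$ and completes the argument.

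The main obstacle is Step 2: without \cref{as:facially_exposed}, the identity $\cG(x,t)=\Gamma^\circ\cap\cF(x,t)^\perp$ may fail (the inclusion $\cG(x,t)\subseteq\Gamma^\circ\cap\cF(x,t)^\perp$ could be strict), and the vanishing of $w$ established on $\cF(x,t)$ would then not extend to all of $\cG(x,t)^\perp$. Recognizing that facial exposedness of $\Gamma^\circ$ is exactly the structural ingredient needed to close this gap is the conceptual heart of the proof; everything else is a routine combination of relative-interior arguments and the fact that $A$ sends $\Gamma$ into the PSD cone.
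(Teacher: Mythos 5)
Your proposal is correct and follows essentially the same route as the paper: reduce to \cref{lem:R'_if_epigraph_tight}, handle the easy direction by specializing to $(1,f)\in\cF(x,t)\subseteq\cG(x,t)^\perp$ with $A[f]\succeq 0$, and for the substantive direction form the vector $w$ of values $(x')^\top A_i x'$, show $-w\in\Gamma^\circ\cap\cF(x,t)^\perp=\cF(x,t)^\triangle=\cG(x,t)$ using \cref{fact:exposed_double_conjugate} under \cref{as:facially_exposed}, and conclude $\ip{w,(1,\gamma)}=0$ for all $(1,\gamma)\in\cG(x,t)^\perp$. The only (harmless) difference is your Step 1, which establishes orthogonality of $w$ to all of $\cF(x,t)$ via a relative-interior perturbation, whereas the paper just notes $\ip{w,(1,f)}=0$ and invokes the definitional identity $\cF(x,t)^\triangle=\Gamma^\circ\cap(1,f)^\perp$ for $(1,f)\in\rint(\cF(x,t))$.
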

\begin{proof}
    Let $(1,f)\in\rint(\cF(x,t))$.
    By \cref{lem:R'_if_epigraph_tight}, it suffices to prove $x'\in\ker(A[f])$ if and only if
    \begin{align*}
    (x')^\top A[\gamma] x' = 0,\,\forall (1,\gamma)\in\cG(x,t)^\perp.
    \end{align*}
    The reverse direction holds immediately as $(1,f) \in \cF(x,t) \subseteq \cG(x,t)^\perp$ and $A[f]\succeq 0$.
    To see the forward direction: Let $x'\in \ker(A[f])$ and set $v_\obj = (x')^\top A_\obj x'$. Similarly, set $v_i = (x')^\top A_i x'$. Then, $\forall (\gamma_\obj,\gamma)\in\Gamma$, we have
    \begin{align*}
    \ip{(v_\obj,v),(\gamma_\obj,\gamma)} &= (x')^\top A(\gamma_\obj,\gamma) x' \geq 0.
    \end{align*}
    Thus, $(-v_\obj,-v)\in \Gamma^\circ$. On the other hand, $\ip{(v_\obj,v),(1,f)} = (x')^\top A[f] x' = 0$. We deduce that $(-v_\obj,-v) \in \Gamma^\circ \cap (1,f)^\perp = \cF(x,t)^\triangle = \cG(x,t)$. In particular, $(x')^\top A(\gamma_\obj,\gamma) x' = \ip{(v_\obj, v), (\gamma_\obj,\gamma)}= 0$ for all $(\gamma_\obj,\gamma)\in\cG(x,t)^\perp$.
    \end{proof}

\subsection{Revisiting the setting of polyhedral $\Gamma$}
\label{subsec:polyhedral}

\citet{wang2020tightness} give sufficient conditions for convex hull exactness under the assumption that $\Gamma_1$ is polyhedral (equivalently, by \cref{rem:Gamma_generated_by_Gamma_P}, $\Gamma$ is polyhedral). This assumption holds, for example, when the set of quadratic forms $\set{A_\obj,A_1,\dots,A_m}$ is simultaneously diagonalizable.
Specializing \cref{thm:conv_nec} to this setting, we 
prove the following \emph{necessary and sufficient} counterpart to \cite[Theorem 1]{wang2020tightness}.

\begin{restatable}
	{theorem}{thmpolyhedral}
	\label{thm:polyhedral}
	Suppose \cref{as:definite} holds and that $\Gamma$ is polyhedral.
	Then, for any $(x,t)\in\cS_\SDP$ such that $2t = \sup_{\gamma\in\Gamma_1}[\gamma,q(x)]$  and any vector $f$ such that $(1,f)\in \rint(\cF(x,t))$, we have
	\begin{align*}
	\cR'(x,t) = \set{(x',t')\in\R^{n+1}:\,\begin{array}{l}
		x' \in \ker(A[f])\\
		\ip{b[\gamma], x'} - t' = 0,\,\forall (1,\gamma) \in \cF(x,t)
	\end{array}}.
	\end{align*}
The only dependence on $(x,t)$ in the above description of $\cR'$ is in the definition of $\cF = \cF(x,t)$. 
	Hence, $\conv(\cS) = \cS_\SDP$ if and only if the above set is nontrivial for every $\cF\faceeq\Gamma$ which is exposed by some vector $q(x) - 2te_\obj$ for $(x,t)\in\cS_\SDP\setminus\cS$.
\end{restatable}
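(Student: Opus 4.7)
My plan is to derive Theorem \ref{thm:polyhedral} by specializing Proposition \ref{prop:R'_exposed}. Since $\Gamma$ polyhedral implies $\Gamma^\circ$ is polyhedral and hence facially exposed, Assumption \ref{as:facially_exposed} is in force, and Proposition \ref{prop:R'_exposed} gives
\[
\cR'(x,t) = \set{(x',t')\in\R^{n+1}:\, x'\in\ker(A[f]),\ \ip{A[\eta]x + b[\eta], x'} - t' = 0,\ \forall\, (1,\eta)\in\cG(x,t)^\perp}.
\]
The remaining work is to reduce this description to the target one, which amounts to (i) replacing the indexing set $\cG(x,t)^\perp$ by $\cF(x,t)$ and (ii) dropping the $\ip{A[\eta]x, x'}$ term.

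For the forward inclusion, I would take any $(x',t')\in\cR'(x,t)$ and fix $(1,\gamma)\in\cF(x,t)\subseteq\cG(x,t)^\perp$. Since $(1,f)\in\rint(\cF(x,t))$, I would squeeze $\gamma$ by $f$: the vector $(1,(1+\epsilon)f - \epsilon\gamma) = (1,f) + \epsilon((1,f)-(1,\gamma))$ lies in $\cF(x,t)\subseteq\Gamma$ for small $\epsilon>0$, so $(1+\epsilon)A[f] \succeq \epsilon A[\gamma]$. Combined with $A[\gamma]\succeq 0$, this forces $\ker(A[f])\subseteq\ker(A[\gamma])$, hence $A[\gamma]x' = 0$. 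Then $\ip{A[\gamma]x, x'} = \ip{x, A[\gamma]x'} = 0$, and the constraint collapses to $\ip{b[\gamma], x'} - t' = 0$.

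For the reverse inclusion, I would view $L(\gamma_\obj,\eta) := \ip{A(\gamma_\obj,\eta)x + b(\gamma_\obj,\eta), x'} - \gamma_\obj t'$ as a linear functional on $\R^{1+m}$. The squeeze argument above shows that, under the hypotheses of the target set, $L$ vanishes on $\cF(x,t)$, and hence on $\spann(\cF(x,t))$ by linearity. Polyhedrality then brings in the Galois correspondence between faces of $\Gamma$ and $\Gamma^\circ$ via $\triangle$, which yields the dimension identity $\dim\cF(x,t) + \dim\cG(x,t) = 1+m$. Since $\spann(\cF(x,t))\subseteq\cG(x,t)^\perp$ and both subspaces then have dimension $\dim\cF(x,t)$, they coincide. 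Hence $L$ vanishes on all of $\cG(x,t)^\perp$, and in particular on the slice $\set{(1,\eta)\in\cG(x,t)^\perp}$, giving $(x',t')\in\cR'(x,t)$.

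The final necessary-and-sufficient clause would then follow immediately from Theorem \ref{thm:conv_nec}, combined with the observation that $\cF(x,t) = \Gamma\cap(q(x)-2te_\obj)^\perp$ (by \cref{fact:q_in_rint_G}) is precisely the face of $\Gamma$ exposed by $q(x) - 2te_\obj$. The main obstacle will be establishing the dimension identity $\dim\cF(x,t) + \dim\cG(x,t) = 1+m$: while this is a standard consequence of the Galois correspondence for polyhedral face lattices, applying it cleanly requires tracking lineality spaces, since $\Gamma$ need not be full-dimensional in $\R^{1+m}$.
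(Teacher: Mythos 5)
Your proposal is correct and follows essentially the same route as the paper's proof: specialize \cref{prop:R'_exposed} (polyhedrality of $\Gamma$ gives facial exposedness of $\Gamma^\circ$), identify $\cG(x,t)^\perp$ with $\spann(\cF(x,t))$ via the conjugate-face dimension identity for polyhedral cones, drop the $\ip{A[\gamma]x,x'}$ terms using $\ker(A[f])\subseteq\ker(A[\gamma])$ for $(1,\gamma)\in\cF(x,t)$, and conclude with \cref{thm:conv_nec}. The dimension identity you flag as the main obstacle is precisely what the paper cites from Tam's theorem on polyhedral cones, and your concern about lineality/non-full-dimensionality is moot since \cref{as:definite} makes $\Gamma$ full-dimensional in $\R^{1+m}$.
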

\begin{proof}
We begin by noting that when $\Gamma$ is polyhedral, so too is $\Gamma^\circ$ so that \cref{as:facially_exposed} holds.
Next, we claim that for every face $\cG\faceeq\Gamma^\circ$
we have $\cG^\perp = \spann(\cG^\triangle)$. By definition, $\spann(\cG^\triangle) = \spann(\Gamma\cap \cG^\perp) \subseteq\cG^\perp$.
On the other hand, as $\Gamma$ and $\Gamma^\circ$ are polyhedral, we have that
$\dim(\cG) + \dim(\cG^\triangle) = m$~\cite[Theorem 3]{tam1976note}.
Rearranging this equation, we have $\dim(\cG^\triangle) = m - \dim(\cG) = \dim(\cG^\perp)$. We conclude that $\cG^\perp = \spann(\cG^\triangle)$.

Let $(x,t)\in\cS_\SDP$ such that $2t = \sup_{\gamma\in\Gamma_1}[\gamma,q(x)]$ and let $(1,f)\in\rint(\cF(x,t))$. Then, 
\cref{obs:epi_tight_clcone} and \cref{prop:R'_exposed} imply that
\begin{align*}
\cR'(x,t)
&= \set{(x',t')\in\R^{n+1}:\,\begin{array}{l} x'\in \ker(A[f])\\
	\ip{A[\gamma]x + b[\gamma],x'} - t' = 0 ,\,\forall (1,\gamma)\in\cG(x,t)^\perp
\end{array}}\\
&= \set{(x',t')\in\R^{n+1}:\,\begin{array}{l}
	x'\in \ker(A[f])\\
	\ip{A[\gamma]x + b[\gamma],x'} - t' = 0 ,\,\forall (1,\gamma)\in\cF(x,t)
\end{array}}\\
&= \set{(x',t')\in\R^{n+1}:\,\begin{array}{l} 
	x'\in \ker(A[f])\\
	\ip{b[\gamma],x'} - t' = 0 ,\,\forall (1,\gamma)\in\cF(x,t)
\end{array}}.
\end{align*}
Here, the second line follows as $\cG^\perp = \spann(\cG^\triangle)$ holds for every face $\cG\faceeq\Gamma^\circ$ and by definition $\cF(x,t)=\cG(x,t)^\triangle$. The third line follows from the fact that $(1,f)\in\rint(\cF(x,t))$ implies $\ker(A[f])\subseteq\ker(A[\gamma])$ for every $(1,\gamma)\in\cF(x,t)$. 
The result then follows from \cref{thm:conv_nec}.
\end{proof}

\begin{remark}\label{rem:polyhedral}
The main difference between \cref{thm:polyhedral} and \cite[Theorem 1]{wang2020tightness} is that \cref{thm:polyhedral} considers certain (\textit{a fortiori} semidefinite) faces of $\Gamma$ whereas \cite[Theorem 1]{wang2020tightness} imposes a constraint on \emph{every} semidefinite face of $\Gamma$. This idea of restricting the analysis to certain faces of $\Gamma$ was previously investigated by \cite{burer2019exact,locatelli2020kkt} who used it to provide sufficient conditions for objective value exactness.\mathprog{\qed}
\end{remark}

\section{Example applications of \cref{thm:conv_hull_sufficient,thm:conv_nec,thm:polyhedral}}
\label{sec:apps}

We apply the results of \cref{sec:conv_hull_exactness} to two examples.

 \subsection{Quadratic matrix programs}
\label{subsec:quadratic_matrix_programming}

Quadratic matrix programs (QMPs)~\cite{beck2007quadratic,wang2020tightness} are a generalization of QCQPs where the decision variable $x\in\R^n$ is replaced by a decision matrix $X\in\R^{r\times k}$. These problems find a variety of applications, e.g., robust least squares problems and sphere packing problems.
Formally, a QMP is an optimization problem in the variable $X\in\R^{r\times k}$, where the constraints and objective function are each of the form
\begin{align*}
\tr(X^\top \bb A X) + 2\tr(B^\top X) + c
\end{align*}
for some $\bb A\in\S^r$, $B\in\R^{r\times k}$, and $c\in\R$.

Alternatively, letting $x\in\R^n$ (resp.\ $b\in\R^n$) denote the vector formed by stacking the columns of $X$ (resp.\ $B$) on top of each other, we can rewrite the above expression as
\begin{align*}
x^\top (I_k\otimes \bb A)x + 2\ip{b, x} + c.
\end{align*}
Thus, we can view QMPs as the special class of QCQPs where $A_\obj,A_1,\dots,A_m$ are each of the form $I_k\otimes \bb A$ for some $\bb A \in\S^r$.

Below, we establish a sufficient condition for convex hull exactness in this setting.
Specifically, we will show that if \cref{as:definite} holds, for each $i\in[0,m]$, we can write $A_i = I_k \otimes \bb A_i$ for some $\bb A_i\in\S^r$, and $k\geq m$,
then $\cR'(x,t)$ is nontrivial for every $(x,t)\in\cS_\textup{SDP}\setminus\cS$. By \cref{thm:conv_hull_sufficient}, this implies that $\conv(\cS) = \cS_\SDP$.

Fix $(x,t)\in\cS_\SDP\setminus\cS$.
Based on \cref{thm:conv_hull_sufficient,lem:R'_if_epigraph_tight}, our goal is to prove that
\begin{align}
\cR'(x,t) &= \set{ (x',t')\in\R^{n+1}:\,\begin{array}
	{l}
	\quad x'^\top A[\gamma] x' = 0 ,\,\forall (1,\gamma)\in\cG(x,t)^\perp\\
	\quad \ip{A[\gamma]x + b[\gamma], x'} - t' = 0 ,\,\forall (1,\gamma)\in\cG(x,t)^\perp 
\end{array}} 
\label{eq:qmp_R}
\end{align}
is nontrivial.
We claim that it suffices to show how to construct a nonzero $y\in\R^r$ such that
\begin{align}\label{eq:qmp_y}
y^\top \bb A[\gamma] y = 0 ,\,\forall (1,\gamma) \in\cG(x,t)^\perp.
\end{align}
To see that this suffices, note that for any $w\in\R^k$, the vector $x'\coloneqq w\otimes y$ satisfies the first constraint in \eqref{eq:qmp_R} since for $(1,\gamma)\in\cG(x,t)^\perp$, we have
\begin{align*}
(w\otimes y)^\top A[\gamma] (w\otimes y) = (w^\top w)(y^\top \bb A[\gamma] y) = 0.
\end{align*}
Then, $(w\otimes y, t')\in\cR'(x,t)$ if and only if the following relation also holds
\begin{align*}
\ip{A[\gamma] x + b[\gamma], w\otimes y} - t' = 0 ,\,\forall (1,\gamma)\in\cG(x,t)^\perp.
\end{align*}
This is a system of
$\dim(\cG(x,t)^\perp)$-many homogeneous linear equations in the variables $(w,t')\in\R^{k+1}$.
Note that as $\cG(x,t) \ni q(x) - 2te_\obj$, which is nonzero by assumption, we have that $\dim(\cG(x,t)^\perp) \leq m$. As $k+1> m$ by assumption, we deduce that this system has a nontrivial solution. Thus, we conclude that \eqref{eq:qmp_R} is nontrivial if there exists a nonzero $y\in\R^r$ satisfying \eqref{eq:qmp_y}. 

It remains to construct $y$.
By definition of $\cS_\SDP$, there exists $Y\succeq 0$ such that
\begin{align}
\label{eq:system_in_lifted_space}
\begin{cases}
q_\obj(x) + \ip{A_\obj, Y} \leq 2t,\\
q_i(x) + \ip{A_i, Y} \leq 0,\,\forall i\in[m]. \end{cases}
\end{align}
Without loss of generality, we can assume $Y = ({1\over k}I_k) \otimes \bb Y$.
As $(x,t)\notin\cS$, we have that $\bb Y\in\S^r_+\setminus\set{0}$ and so we may pick a nonzero $y\in\R^r$ such that $yy^\top \preceq \bb Y$.
For notational convenience, let $\ell_\obj \coloneqq y^\top \bb A_\obj y$ and $\ell_i \coloneqq y^\top \bb A_i y$ for $i\in[m]$. 
Note that for any $(\gamma_\obj,\gamma)\in\Gamma$, we have $A(\gamma_\obj,\gamma)\succeq 0$, or equivalently $\bb A(\gamma_\obj,\gamma)\succeq 0$. Thus, $yy^\top \preceq \bb Y$ implies that 
$\ip{(\gamma_\obj, \gamma), (\ell_\obj,\ell)} 
=y^\top \bb A(\gamma_\obj,\gamma) y 
\leq \ip{\bb A(\gamma_\obj,\gamma),\bb Y}$. Also, from $Y = ({1\over k}I_k) \otimes \bb Y$ and the relation between the matrices $A_\obj,A_i$ and $\bb A_\obj, \bb A_i$, we have $\ip{\bb A(\gamma_\obj,\gamma),\bb Y}=\ip{A(\gamma_\obj,\gamma),Y}$. 
We deduce that for $(\gamma_\obj,\gamma)\in\Gamma$,
\begin{align*}
&\ip{\begin{pmatrix}
	\gamma_\obj\\
	\gamma
\end{pmatrix},q(x) - 2te_\obj + \begin{pmatrix}
	\ell_\obj\\\ell
\end{pmatrix}} \\
& \leq 
\ip{\begin{pmatrix}
	\gamma_\obj\\
	\gamma
\end{pmatrix},
q(x) - 2te_\obj + \begin{pmatrix}
	\ip{A_\obj, Y}\\
	\left(\ip{A_i, Y}\right)_i
\end{pmatrix}} \leq 0,
\end{align*}
where the last inequality follows from \eqref{eq:system_in_lifted_space} and $(\gamma_\obj,\gamma)\in\Gamma \subseteq\R^{1+m}_+$.
Thus, $q(x) - 2te_\obj + (\ell_\obj,\ell)\in\Gamma^\circ$. Moreover, as $\bb A(\gamma_\obj,\gamma)\succeq 0$, we have 
\begin{gather*}
0 \geq - y^\top \bb A(\gamma_\obj,\gamma) y 
= \ip{\begin{pmatrix}
	\gamma_\obj\\\gamma
\end{pmatrix}, \begin{pmatrix}
	-\ell_\obj\\
	-\ell
\end{pmatrix}},
\end{gather*}
which implies $-(\ell_\obj,\ell)\in\Gamma^\circ$. 
We have shown that $q(x) - 2te_\obj + (\ell_\obj,\ell)$ and $-(\ell_\obj,\ell)$ both lie in $\Gamma^\circ$. Then, as $q(x) -2te_\obj\in\rint(\cG(x,t))$, we deduce that $(\ell_\obj,\ell)\in\spann(\cG(x,t))$.
In particular, $y^\top \bb A(\gamma_\obj,\gamma) y = \ip{(\gamma_\obj, \gamma), (\ell_\obj,\ell)} = 0$ for all $(\gamma_\obj,\gamma)\in\cG(x,t)^\perp$.

\begin{remark}
\label{rem:qmp}
SDP exactness for QMPs was previously studied by \citet{wang2020tightness,beck2007quadratic}.
Specifically, under \cref{as:definite}, \citet{beck2007quadratic} shows that objective value exactness holds if $k\geq m$ and \citet{wang2020tightness} show that convex hull exactness holds if $k \geq m + 2$.
The above calculation strengthens both of these results by showing that under \cref{as:definite} convex hull exactness holds if $k \geq m$.\mathprog{\qed}
\end{remark}

\subsection{Sets defined by two quadratic constraints}
\label{subsec:two_constraint}

Next, consider convex hull exactness for sets defined by two quadratic constraints.
In the case of strict inequalities and under a strict feasibility assumption, \citet{yildiran2009convex} establishes that the convex hull of such a set is given by two second-order cone constraints that can be obtained from the eigenvalues of an associated matrix pencil. Below, we will characterize when this convex hull is equal to its standard SDP relaxation.

Consider a set defined by two quadratic constraints, $\hat \cS\coloneqq \set{x\in\R^n:\, q_1(x)\leq 0, q_2(x)\leq 0}$. To connect such a set to the epigraphical sets studied previously in this paper, we will define $q_\obj(x)\coloneqq 0$.
Then, \cref{as:definite} states that there exists $\hat{\gamma}=(\hat{\gamma}_1,\hat{\gamma}_2)\in\R^2_+$ such that $\hat{\gamma}_1 A_1 + \hat{\gamma}_2 A_2\succ 0$.
The projected SDP relaxation of this set is given by
\begin{align*}
    \hat\cS_\SDP\coloneqq \set{x\in\R^n:\, \begin{array}{l}
    \exists X\succeq xx^\intercal\\
    \ip{A_i, X} + 2 b_i ^\intercal x + c_i \leq 0,\forall i = 1,2
    \end{array}}.
\end{align*}
Our goal is to understand when it holds that $\conv(\hat\cS)= \hat\cS_\SDP$.
Recognizing these sets as the $x$-components of $\cS$ and $\cS_\SDP$, we deduce the equivalence holds if and only if $\conv(\cS) = \cS_\SDP$.

As $A_\obj = 0$, we have that
\begin{align*}
    \Gamma = \set{(\gamma_\obj,\gamma)\in\R^{1+2}_+:\, \gamma_1 A_1 + \gamma_2 A_2\succeq 0}.
\end{align*}
Note that $\Gamma_1$ is a cone in $\R^2$ so that it is necessarily polyhedral. We deduce that $\Gamma$ is also polyhedral and we may apply \cref{thm:polyhedral}.
As $A[\hat{\gamma}]\succ0$, $\Gamma_1$ is full dimensional. 
Let
$\gamma^{(1)}, \gamma^{(2)}\in \R^2_+$ satisfy
$\Gamma_1 = \cone(\set{\gamma^{(1)}, \gamma^{(2)}})$. 
Then, $\Gamma = \cone\left(\set{(0,\gamma^{(1)}),(0,\gamma^{(2)}), e_\obj}\right)$ so that $\Gamma$ has exactly 6 proper faces that we deal with individually below.

Suppose $\cF = \cone(e_\obj)$. Then,
\begin{align*}
    \set{(x',t')\in\R^{n+1}:\, \begin{array}{l}
    x'\in \ker(0)\\
    -t' = 0
    \end{array}} \supseteq \R^n \times \set{0}
\end{align*}
is nontrivial.

Next, suppose $\cF = \cone((0,\gamma^{(i)}))$ for some $i=1,2$ or $\cF = \cone(\set{(0,\gamma^{(1)}),(0,\gamma^{(2)})})$. Note that $\cF$ does not contain any points of the form $(1,f)$. Thus, by \cref{obs:epi_tight_clcone}, $\cF$ is not exposed by any vector $q(x)-2te_\obj$ where $x,t$ satisfy $2t = \sup_{\gamma\in\Gamma_1}[\gamma,q(x)]$. \Cref{thm:polyhedral} then places no constraint on the corresponding $\cR'$.

Finally, suppose $\cF = \cone(\set{e_\obj, (0,\gamma^{(1)})})$. The case $\gamma^{(2)}$ is analogous.
Note that $(1, \gamma^{(1)})\in\rint(\cF)$. The relevant set in \cref{thm:polyhedral} is
\begin{align*}
    \set{(x',t')\in \R^{n+1}:\, \begin{array}{l}
    x'\in \ker(A[\gamma^{(1)}])\\
    \ip{b[\gamma^{(1)}],x'}= 0\\
    t' = 0
    \end{array}}.
\end{align*}
This set is nontrivial if and only if $\ker(A[\gamma^{(1)}])\cap b[\gamma^{(1)}]^\perp$ is nontrivial.
\Cref{thm:polyhedral} requires that this set is nontrivial if $\cF$ is exposed by some vector of the form $q(x)-2te_\obj$ for some $(x,t)\in\cS_{\SDP}\setminus \cS$, i.e.,
if $x\in\hat{\cS}_{\SDP}\setminus \hat{\cS}$ and $t$ satisfy $t=0$ and
\begin{align*}
    [\gamma^{(1)},q(x)]=0\quad\text{and}\quad
    [\gamma^{(2)},q(x)]<0.
\end{align*}
Suppose this system is feasible and $x$ satisfies the above relations. Our goal is to show that $\gamma^{(1)}\in\R^2_{++}$. Suppose for the sake of contradiction that $\gamma^{(1)}=e_1$. The above relations would imply that $0=[\gamma^{(1)},q(x)]=q_1(x)$ and $0>[\gamma^{(2)},q(x)]=\gamma^{(2)}_1 q_1(x) + \gamma^{(2)}_2 q_2(x) =  \gamma^{(2)}_2 q_2(x)$. We see that $\gamma^{(2)}_2\neq 0$, and in fact $\gamma^{(2)}_2>0$ (as $\gamma^{(2)}\in\R^2_+$), and thus $0>q_2(x)$. Hence, $x\in\hat{\cS}$, a contradiction.
In particular, if $\gamma^{(1)}= e_1$, then $\cF$ is \emph{not} exposed by some vector of the form $q(x)-2te_\obj$ with $(x,t)\in\cS_\SDP\setminus \cS$. Similarly, if $\gamma^{(1)}= e_2$, then $\cF$ is \emph{not} exposed by some vector of the form $q(x)-2te_\obj$ with $(x,t)\in\cS_\SDP\setminus \cS$.
Note also that if $\gamma^{(1)}\in\R^2_{++}$ and $[\gamma^{(1)}, q(x)] = 0 > [\gamma^{(2)}, q(x)]$, then $x \in \hat \cS_\SDP \setminus \hat\cS$.

We summarize this discussion below.
\begin{lemma}\label{lem:two_constraints_sdp-exactness}
Suppose there exists $\hat\gamma\in\R^2_+$ such that $A[\hat\gamma]\succ 0$. Let 
$\gamma^{(1)}, \gamma^{(2)}\in \R^2_+$ be generators of $\Gamma_1$. 
If $\gamma^{(1)}\in\R^2_{++}$ and there exists $x\in\R^n$ such that $[\gamma^{(1)},q(x)]=0$ and $[\gamma^{(2)},q(x)]<0$, then we require $\ker(A[\gamma^{(1)}])\cap b[\gamma^{(1)}]^\perp$ is nontrivial. Similarly, if $\gamma^{(2)}\in\R^2_{++}$ and there exists $x\in\R^n$ such that $[\gamma^{(2)},q(x)]=0$ and $[\gamma^{(1)},q(x)]<0$, then we require $\ker(A[\gamma^{(2)}])\cap b[\gamma^{(2)}]^\perp$ is nontrivial. Then, $\conv(\hat \cS)= \hat\cS_\SDP$ if and only if the requirements stated above hold.

In particular, if $q_1$ and $q_2$ are both nonconvex, primal and dual strict feasibility are satisfied (i.e., $\exists\hat x\in\R^n$ such that $q_1(\hat x), q_2(\hat x)< 0$ and
$\exists\hat\gamma\in\R^2_+$ such that $A[\hat\gamma]\succ 0$), then $\conv(\hat \cS)= \hat \cS_\SDP$ if and only if for both $i=1,2$, we have that
\begin{align*}
\ker(A[\gamma^{(i)}])\cap b[\gamma^{(i)}]^\perp  \text{ is nontrivial}.
\end{align*}
\end{lemma}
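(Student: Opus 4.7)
The plan is to apply \cref{thm:polyhedral} to each proper face of $\Gamma = \cone(\{(0,\gamma^{(1)}), (0,\gamma^{(2)}), e_\obj\})$, of which there are exactly six. The three ``pure $\gamma$'' faces $\cone((0,\gamma^{(1)}))$, $\cone((0,\gamma^{(2)}))$, and $\cone(\{(0,\gamma^{(1)}), (0,\gamma^{(2)})\})$ contain no vector of the form $(1,f)$, so by \cref{obs:epi_tight_clcone} none of them is exposed by any $q(x) - 2t e_\obj$ with $(x,t) \in \cS_\SDP$ at the tightness frontier, and \cref{thm:polyhedral} generates no constraint. For $\cF = \cone(e_\obj)$, direct substitution into \cref{thm:polyhedral} shows that $\cR'$ contains $\R^n \times \{0\}$ and is always nontrivial.

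This leaves the two maximal proper faces $\cF_i \coloneqq \cone(\{e_\obj, (0,\gamma^{(i)})\})$, $i=1,2$, with $(1,\gamma^{(i)}) \in \rint(\cF_i)$. Substituting into \cref{thm:polyhedral} and using $q_\obj = 0$ gives
\begin{align*}
\cR'(x,t) = \set{(x',t') : x' \in \ker(A[\gamma^{(i)}]),\ \ip{b[\gamma^{(i)}], x'} = 0,\ t' = 0},
\end{align*}
which is nontrivial if and only if $\ker(A[\gamma^{(i)}]) \cap b[\gamma^{(i)}]^\perp \neq \{0\}$. The face $\cF_i$ is exposed by some $q(x) - 2t e_\obj$ with $(x,t) \in \cS_\SDP \setminus \cS$ precisely when $t = 0$, $[\gamma^{(i)}, q(x)] = 0$, $[\gamma^{(j)}, q(x)] < 0$, and $x \notin \hat\cS$. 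The contradiction argument in the paragraph preceding the lemma shows that if $\gamma^{(i)}$ is a standard unit vector then these conditions force $x \in \hat\cS$; thus $\gamma^{(i)} \in \R^2_{++}$ is necessary for exposure. Combining the above observations yields the first part of the lemma.

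For the ``in particular'' clause, nonconvexity of $q_1, q_2$ yields $A_i \not\succeq 0$, so $e_i \notin \Gamma_1$, forcing both $\gamma^{(1)}, \gamma^{(2)} \in \R^2_{++}$. It remains to construct, for each $i \in \{1,2\}$, an $x$ witnessing $[\gamma^{(i)}, q(x)] = 0$ and $[\gamma^{(j)}, q(x)] < 0$, so that the $i$th kernel condition is genuinely required; this is the step I expect to be the main obstacle. A clean way is first to show $\ker(A[\gamma^{(j)}]) \not\subseteq \ker(A[\gamma^{(i)}])$, since otherwise any $v$ in the former is killed by both $A[\gamma^{(i)}]$ and $A[\gamma^{(j)}]$, and hence by $A_1$ and $A_2$ (as $\gamma^{(i)}, \gamma^{(j)}$ are linearly independent in $\R^2$), contradicting the existence of $\hat\gamma$ with $A[\hat\gamma] \succ 0$. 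Picking $v \in \ker(A[\gamma^{(j)}])$ with $v^\top A[\gamma^{(i)}] v > 0$ and choosing the sign of $v$ so that the affine map $t \mapsto [\gamma^{(j)}, q(\hat x + tv)]$ is nonincreasing in $t \geq 0$, the first positive zero $t^*$ of the upward-opening convex quadratic $t \mapsto [\gamma^{(i)}, q(\hat x + tv)]$ yields $x \coloneqq \hat x + t^* v$ with $[\gamma^{(j)}, q(x)] \leq [\gamma^{(j)}, q(\hat x)] < 0$, as required.
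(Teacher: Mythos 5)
Your face-by-face application of \cref{thm:polyhedral} in the first two paragraphs matches the paper's own pre-lemma discussion, and your witness construction for the ``in particular'' clause is in the same spirit as the paper's: the paper moves from $\hat x$ along an arbitrary nonzero $y\in\ker(A[\gamma^{(2)}])$ and obtains positivity for large $\alpha$ from strong convexity of $[\hat\gamma,q(\hat x+\alpha y)]$ together with $\hat\gamma\in\cone(\set{\gamma^{(1)},\gamma^{(2)}})$, whereas you ask for $v\in\ker(A[\gamma^{(j)}])$ with $v^\top A[\gamma^{(i)}]v>0$ and use convexity of $t\mapsto[\gamma^{(i)},q(\hat x+tv)]$ directly. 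However, there is a gap in your justification of the key claim $\ker(A[\gamma^{(j)}])\not\subseteq\ker(A[\gamma^{(i)}])$. Your argument only shows that $\ker(A[\gamma^{(i)}])\cap\ker(A[\gamma^{(j)}])=\set{0}$ (a common nonzero kernel vector would be annihilated by $A_1$ and $A_2$, hence by $A[\hat\gamma]\succ 0$); this contradicts the containment only if you already know $\ker(A[\gamma^{(j)}])\neq\set{0}$, which you never establish. If $A[\gamma^{(j)}]$ were positive definite, the containment would hold trivially, no contradiction would arise, and your choice of $v$ would be impossible.

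The missing step is precisely the fact the paper asserts: since $q_1,q_2$ are nonconvex, each generator $\gamma^{(i)}$ lies in $\R^2_{++}$ and spans a boundary (extreme) ray of the full-dimensional cone $\Gamma_1$; if $A[\gamma^{(i)}]\succ 0$, then by continuity a neighborhood of $\gamma^{(i)}$ intersected with $\R^2_{++}$ would lie in $\Gamma_1$, putting $\gamma^{(i)}$ in the interior of $\Gamma_1$ and contradicting that it spans an extreme ray of a pointed full-dimensional cone in $\R^2$. Hence $A[\gamma^{(i)}]$ is singular for $i=1,2$, so $\ker(A[\gamma^{(j)}])$ is nontrivial; with this supplied, your non-containment claim and the remainder of your construction go through. (Note that the paper's construction needs only nontriviality of the kernel, not non-containment, but it too rests on this singularity fact, so you should state and prove it explicitly.)
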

\begin{proof}
The first claim has already been proved in the preceding discussion under the dual strict feasibility assumption. 
The remainder of the proof further assumes that $q_1$ and $q_2$ are both nonconvex and that there exists $\hat x\in\R^n$ such that $q_1(\hat x),q_2(\hat x) < 0$. Under these additional assumptions, we will show that $\gamma^{(1)}\in\R^2_{++}$ and there exists $x\in\R^n$ such that $[\gamma^{(1)},q(x)] = 0$ and $[\gamma^{(2)}, q(x)]<0$. An identical proof will show that the analogous statement with the roles of $\gamma^{(1)}$ and $\gamma^{(2)}$ interchanged holds. This will complete the proof of the second claim.

First, $\gamma^{(1)},\gamma^{(2)}\in\R^2_{++}$ follows from the assumption that $q_1,\,q_2$ are both nonconvex.
As  $\gamma^{(i)}\in\R^2_{++}$, we deduce that $A[\gamma^{(i)}]$ has a nontrivial kernel for both $i=1,2$.
Note that $[\gamma^{(i)}, q(\hat x)]<0$ for both $i=1,2$.
Now pick a nonzero vector $y\in\ker(A[\gamma^{(2)}])$. Then,
\begin{align*}
    [\gamma^{(2)}, q(\hat x + \alpha y)] = q(\hat x) + 2 \alpha b[\gamma^{(2)}]^\top y 
\end{align*}
is a linear function in $\alpha$ that is negative at $\alpha = 0$ and that is without loss of generality nonincreasing as $\alpha\to \infty$ (otherwise, negate $y$).
On the other hand, $[\hat\gamma, q(\hat x + \alpha y)]$ is a strongly convex quadratic function in $\alpha$ so that it is positive for some $\alpha$ large enough. As $\hat\gamma\in \Gamma_1 = \cone(\set{\gamma^{(1)}, \gamma^{(2)}})$, we deduce that $[\gamma^{(1)}, q(\hat x + \alpha y)]$ is positive for all $\alpha>0$ large enough. As $[\gamma^{(1)}, q(\hat x+\alpha y)]$ is a continuous function of $\alpha$ and $[\gamma^{(1)}, q(\hat x)]<0$, there exists some $\alpha>0$ for which
$[\gamma^{(1)},q(\hat x + \alpha y)] = 0$ and $[\gamma^{(2)}, q(\hat x + \alpha y)]<0$.
\end{proof}

\begin{remark}
\Cref{lem:two_constraints_sdp-exactness} presents an easy to check condition for SDP convex hull exactness in the case of two quadratic constraints. In contrast,  testing SDP exactness via the main result of \cite{yildiran2009convex} requires first the nontrivial task of computing $\clconv(\hat \cS)$ and then checking if it is equal to $\hat \cS_\SDP$. 
\mathprog{\qed}
\end{remark}

\section*{Acknowledgments}
This work was supported by NSF [CMMI 1454548], ONR [N00014-19-1-2321], and AFOSR [FA95502210365].

{
\bibliographystyle{plainnat}

}

\end{document}